\documentclass[twoside,a4paper,12pt]{amsart}
\usepackage{amsfonts}
\usepackage{amssymb}
\usepackage{amsfonts}
\usepackage{amssymb}
\usepackage{amsthm}
\usepackage{amsmath}
\usepackage{a4wide}
\usepackage{amsthm}
\usepackage[usenames,dvipsnames,svgnames,table]{xcolor}

\hyphenation{me-thods}
\hyphenation{mi-ni-mi-zers}
\hyphenation{Breu-ning}


\theoremstyle{plain}
\newtheorem{teo}{Theorem}[section]
\newtheorem{Lemma}[teo]{Lemma}
\newtheorem{Proposition}[teo]{Proposition}

\newtheorem{ackn}{Acknowledgement\!}

\theoremstyle{definition}
\newtheorem{Definition}[teo]{Definition}

\theoremstyle{remark}
\newtheorem{Remark}[teo]{Remark}

\def\R{\mathbb R}

\newcommand{\D }{\Delta }

\newcommand{\e }{\varepsilon }

\renewcommand{\i }{\iota}

\newcommand{\hF}{\hat{F}}

\newcommand{\s }{\sigma }
\newcommand{\Sig }{\Sigma}
\newcommand{\hSig }{\hat{\Sigma}}
\renewcommand{\t }{\tau }

\newcommand{\intbar}{\etaathop{\int\etaakebox(-13.5,0){\rule[4pt]{.7em}{0.3pt}}%
\kern-6pt}\nolimits}

\newcommand{\be}{\begin{equation}}
\newcommand{\ee}{\end{equation}}
\newcommand{\bea}{\begin{equation*}}
\newcommand{\eea}{\end{equation*}}

\newcommand{\bg}{\bar{g}}

\newcommand{\partialeo}{\partial_{\epsilon}{\Big|}_{\epsilon=0}}

\newcommand{\grad}{\nabla}

\newcommand{\bgrad}{\bar{\nabla}}

\newcommand{\mE}{\mathcal{E}}

\newcommand{\mW}{\mathcal{W}}

\newcommand{\hf}{\hat{f}}

\flushbottom
\theoremstyle{plain}

\newtheorem{lemma}[teo]{Lemma}
\newtheorem{prop}[teo]{Proposition}

\theoremstyle{definition}
\newtheorem{dfnz}[teo]{Definition}

\newtheorem{lem}{Lemma}[section]

\newtheorem{thm}[lem]{Theorem}

\theoremstyle{remark}

\def\R{{{\mathbb R}}}
\def\SS{{{\mathbb S}}}

\def\RRR{{\mathrm R}}

\def\AAA{{\mathrm A}}

\def\HHH{{\mathrm H}}

\def\RRR{{\mathrm R}}

\def\hF{\hat{F}}



\newcommand{\N}{\mathbb{N}}

\def\be{\begin{equation}}
\def\ee{\end{equation}}
\def\bea{\begin{eqnarray*}}
\def\bean{\begin{eqnarray}}
\def\eean{\end{eqnarray}}
\def\eea{\end{eqnarray*}}

\author[A. Magni]
{Annibale Magni}
\address[Annibale Magni]{Institut f\"ur Angewandte Mathematik, Westf\"alische--Wilhelms Universit\"at M\"unster, Einsteinstrasse 62, 48149 M\"unster (Germany).}
\email[]{magni@uni-muenster.de}

\subjclass[2010]{53C44, 35K46, 49Q20}

\keywords{Higher order geometric flows, Willmore functional, geometric measure theory}

\title
[A convergence result for the Gradient Flow of $\int |\AAA|^2$ in Riemannian Manifolds]
{A convergence result for the Gradient Flow of $\int |\AAA|^2$ in Riemannian Manifolds}

\begin{document}

\maketitle

\begin{abstract}
We study the gradient flow of the $L^2-$norm of the second fundamental form for smooth immersions of two-dimensional surfaces into compact Riemannian manifolds. By analogy with the results obtained in \cite{Link13} and \cite{MeWW13} for the Willmore flow, we prove lifespan estimates in terms of the $L^2-$concentration of the second fundamental form of the initial data and we show the existence of blowup limits. Under special condition both on the initial data and on the target manifold, we prove a long time existence result for the flow and the subconvergence to a critical immersion.
\end{abstract}

\section{Introduction}
Let $(N^n, \bg)$ be a closed (i.e. compact and without boundary) $n$-dimensional Riemannian manifold and $\Sigma$ a closed surface. For  an immersion $F: \Sigma \to N^n$, with associated pullback metric $g := F^* \bg$, second fundamental form $\AAA^F$ and induced measure $\mu_F$, we consider the functional
\be \label{Energy}
\mE(F):= \int_{\Sigma} |\AAA^F|^2 d \mu_F \,.
\ee
In \cite{KuMS14}, adapting L. Simon's techniques from \cite{Simo93}, the problem of finding minimizers of $\mE$ has been addressed in the class of smooth immersions of spheres $F : \SS^2 \to N^3$ into three--dimensional Riemannian Manifolds. In particular, under conditions on the curvature of $N^3$, guaranteeing both a uniform area bound along a minimizing sequence and an upper bound for the infimum of $\mE$, the existence and the smoothness of the minimizers of $\mE$ has been proven. In \cite{MoRi12} the same problem has been addressed in arbitrary codimension and in the more general class of weak branched conformal immersions  $F : \SS^2 \to N^n$. In this setting, the authors have shown that a minimizing sequence (modulo subsequences) either shrinks to a point, or converges (in the sense of currents) to a Lipschitz immersion of $\SS^2$, whose image is made of a connected union of finitely many, possibly branched, weak immersions of $\SS^2$ with finite total curvature. In \cite{MoRi13}, smooth regularity of the minimizing immersions away from the (at most finite) branching points has been proven.\\
In the present work we study the $L^2-$gradient flow of the functional $\mE$ in $n-$dimensional Riemannian manifolds. More precisely, given an immersion $f_0 : \Sigma \to N^n$, we consider the one parameter family of immersions $f : \Sigma \times [0,T) \to N^n$ which solves the initial value problem
\begin{equation} \label{Flow}
\partial_t f = - \grad \mE (f) \,, \quad \textrm{with} \quad f(\cdot,0)=f_0(\cdot)
\end{equation}
(see Definition \ref{dfnz:A2speed} for the explicit expression of $\grad \mE$). Any solution to \eqref{Flow} (which, by the ellipticity of the operator $\grad \mE$, exists unique  for small times for any given initial datum $f_0$) will be called a $\grad \mE-$flow.\\
In the first part of the paper we obtain a priori estimates on the life span of a $\grad \mE-$flow in terms of the concentration of the $L^2$-norm of the second fundamental form $\AAA^{f_0}$ at the initial time. The proofs closely follow the line adopted for the analysis of the gradient flow of the Willmore functional, which have been first addressed in \cite{KuSc01}, \cite{KuSc02} and \cite{Simo01} for immersions in Euclidean target spaces and later in \cite{Link13} and \cite{MeWW13} for immersions into Riemannian manifolds.\\
In the second part of the paper we present a long time existence result for $\grad \mE-$flows in three dimensional Riemannian manifolds. For an initial datum $f_0: \SS^2 \to N^3$ satisfying $\mE(f_0) \leq 8 \pi$ and under suitable conditions on the ambient manifold $(N^3, \bg)$, we show that the $\grad \mE-$flow starting at $f_0$ exists for all positive times and (modulo subsequences) converges to a surface which is critical for the functional $\mE$.
%
%
%
%
\section{Notation and Preliminaries}
In this section we introduce the notations and the conventions which will be used in the rest of the paper.\\
With $(N^n, \bg)$ we will denote an $n$-dimensional Riemannian manifold and with $\bgrad$ its Levi-Civita connection on $TN^n$, with associated Riemann tensor $\bar{\RRR}$.\\
$\Sigma$ will be a  two dimensional connected manifold. For an immersion $F : \Sigma \rightarrow N^n$,  we will call $g := F^* \bg$ the pullback metric on $\Sigma$ and $\mu_F$ its associated Riemannian measure. We also define $\Sigma_{\s}(x_0) := F^{-1}(B^{N^n}_{r}(x_0))$, where $B^{N^n}_{r}(x_0)$ is the geodesic ball in $N^n$ with center $x_0 \in N^n$ and radius $r$. In the case $N^n = \R^n$ with the standard Euclidean metric, we will use the convention $B^{N^n}_{r}(0) =: B^n_r$.\\
With $\perp$ we will denote the projection on the orthogonal complement of $F_*(T\Sigma) \subset TN^n$ along the immersion $F$, and $\grad$ will denote the normal connection on $(F_*(T\Sigma))^{\perp}$.\\
With $\AAA^F$ we will denote the second fundamental form of $F(\Sigma)$ in $N^n$. More explicitly, in a local basis on $\Sigma$, it holds
$$
\AAA^F_{ij} := (\bgrad_i \partial_j F)^{\perp}\,.
$$
We define the mean curvature vector of the immersion $F : \Sigma \to N^n$ according to
$$
\HHH^F := g^{ij}\AAA^F_{ij}\,.
$$
With $k_g$ we will denote the Gaussian curvature of $\Sigma$ with respect to the metric $g$ and with $\chi(\Sigma)$ its Euler characteristic.\\
We define the Willmore functional of an immersion $F: \Sigma \to N^n$ as
\begin{equation*}
\mW(F) := \frac{1}{4}\int_{\Sigma} |\HHH^F|^2 d\mu_{F}\,.
\end{equation*}

In the computation of the evolution equations of the relevant geometric quantities we will make use of the Codazzi equation
\be \label{Codazzi}
\grad_X \AAA^F(Y,Z) - \grad_Y \AAA^F(X,Z) = (\bar{\RRR}(\tilde{X},\tilde{Y},\tilde{Z}))^{\perp}\,,
\ee
where $X,Y,Z \in T\Sigma$ and $\tilde{X} = F_*X$, $\tilde{Y} = F_*Y$, $\tilde{Z} = F_*Z$.\\
The Ricci equation will be used in the following form
\be \label{Ricci}
g^{ij}(\AAA^F(\partial_i,X)\bg(\AAA^F(\partial_j,Y),V) - \AAA^F(\partial_i,Y)\bg(\AAA^F(\partial_j,X),V))=\RRR^{\perp}(X,Y) V - (\bar{\RRR}(\tilde{X},\tilde{Y})V)^{\perp} \,,
\ee
where $V$ is a normal vector field along $F$ and $\RRR^{\perp}$ is the Riemann tensor of the normal connection associated to the immersion $F$ itself.\\

We now introduce polynomial functions of the second fundamental form of $F$ and of the Riemann tensor of $\bgrad$ which will be useful to detect the structure of the evolution equations of the second fundamental form an of its covariant derivatives along a $\grad\mE-$flow.
\begin{dfnz} \label{PQnotation}
Given an immersion $F:\Sigma \to N^n$ of a smooth surface $\Sigma$ into a Riemannian manifold, we will denote with $P^k_l(\AAA^F)$ any universal linear combination of terms of the form
$$
\grad^{i_1}\AAA^F * \cdots *\grad^{i_l}\AAA^F \quad \textrm{with} \quad |i|:= i_1 + \cdots + i_l = k \,,
$$
where $*$ is a contraction.
By $Q^{k,l}_{(m)}(\AAA^F,\bar{\RRR})$ we will denote any universal linear combination of terms having the structure
$$
\bgrad^r\bar{\RRR} \circ F  * \grad^{i_1}\AAA^F * \cdots * \grad^{i_{\nu}}\AAA^F\ * \i_{\Sigma} * \cdots *\i_{\Sigma} * DF * \cdots * DF \,,
$$
where $r + |i| + \nu = k + l$, $|i| \leq k$, $r \leq m$ (in case $m$ is given), and $\i_{\Sigma}: (F_*(T\Sigma)^{\perp},\grad) \to (TN^n,\bg)$ is the canonical injection.\\
With $Q^{k,l}_{R*R}(\AAA^F,\bar{\RRR})$ we will denote universal linear combinations of terms of the form
$$
\bgrad^{r_1}\bar{\RRR} \circ F * \bgrad^{r_2}\bar{\RRR} \circ F  * \grad^{i_1}\AAA^F* \cdots * \grad^{i_{\nu}}\AAA^F * \i_{\Sigma} * \cdots *\i_{\Sigma} * DF * \cdots * DF\,,
$$
where $r_1 + r_2 + |i| + \nu = k + l$, and $|i| \leq k$.
\end{dfnz}
\begin{Remark}
In the rest of the paper we will often omit the arguments of the $P$ and $Q$ polynomials.\\
Within the notation introduced in Definition \ref{PQnotation}, the following rules hold
\be \label{rule1}
\grad P^k_l = P^{k+1}_l \,,
\ee
\be \label{rule2}
\grad Q^{k,l}_{(m)} = Q^{k+1,l}_{(m+1)}
\ee
and
\be \label{rule3}
Q^{k,l} * \AAA^F = Q^{k,l+1}\,.
\ee
\end{Remark}
%
%
\section{The Euler--Lagrange Equation for $\int_{\Sigma} |\AAA^F|^2 d \mu_F$}
In this section we derive the Euler--Lagrange equation for the functional $\mE$. By means of the $P$ and $Q$ polynomials introduced in Definition \ref{PQnotation}, we also describe the structure of the evolution equations along a $\grad\mE-$flow of the second fundamental form and its covariant derivatives.

\begin{prop}
Let $I \subset \R$ be an interval with $0 \in I$ and $f: \Sigma \times I \to N^n$ a smooth one parameter family of immersions such that $V (x, \e):= \partial_{\e} f(x,\e)$ is normal along $f$ at $\e = 0$. Then it holds

\be \label{varmass}
\partialeo d \mu_f = - \bg(V , \HHH^f) d \mu_f \,,
\ee

\be \label{var2ff}
\partialeo \AAA^f_{ij} = \grad^2_{ij} V - g^{kl} \AAA^f_{il} \bg (V , \AAA^f_{jk}) + (\bar{\RRR}(V, \partial_i f) \partial_j f)^{\perp} \,,
\ee
and
\be \label{varen}
\frac{d}{d \e}{\Big|}_{\e=0} \mE (f)= 2 \int_{\Sigma} \bg \Big( \big(g^{ip} g^{jq}\grad^2_{pq} \AAA^f_{ij} - g^{ij} g^{kl} g^{pq} \bg(\AAA^f_{ip}, \AAA^f_{lq}) \AAA^f_{jk} + \bar{\RRR}(\AAA^f_{ij},\partial_j f) \partial _i f- \frac{1}{2} |\AAA^f|^2 \HHH^f\big), V \Big)  d\mu_f \,.
\ee
\end{prop}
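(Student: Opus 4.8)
The plan is to compute the first variation of $\mE(f)=\int_\Sigma |\AAA^f|^2\,d\mu_f$ by combining the three ingredients, the variations of $d\mu_f$, of $\AAA^f_{ij}$, and of the inverse metric $g^{ij}$, and then to integrate by parts twice to isolate the normal variation field $V$. First I would write $|\AAA^f|^2 = g^{ip}g^{jq}\bg(\AAA^f_{ij},\AAA^f_{pq})$ and differentiate the product at $\e=0$: one gets $\partial_\e|_{\e=0}|\AAA^f|^2 = 2\,g^{ip}g^{jq}\bg(\partial_\e\AAA^f_{ij},\AAA^f_{pq}) + 2(\partial_\e g^{ip})g^{jq}\bg(\AAA^f_{ij},\AAA^f_{pq})$, using the symmetry of $\AAA^f$ in its indices to collapse the two metric-variation terms into one. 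For the metric variation, since $V$ is normal, $\partial_\e g_{il} = -2\,\bg(V,\AAA^f_{il})$ (this is the standard computation, also implicit in \eqref{varmass}), hence $\partial_\e g^{ip} = 2\,g^{ia}g^{pb}\bg(V,\AAA^f_{ab})$. Substituting \eqref{var2ff} for $\partial_\e\AAA^f_{ij}$ and \eqref{varmass} for $\partial_\e d\mu_f$, and expanding, yields
\[
\frac{d}{d\e}\Big|_{\e=0}\mE(f)=\int_\Sigma\Big(2\,g^{ip}g^{jq}\bg\big(\grad^2_{ij}V - g^{kl}\AAA^f_{il}\bg(V,\AAA^f_{jk}) + (\bar{\RRR}(V,\partial_i f)\partial_j f)^\perp,\,\AAA^f_{pq}\big) + 2(\partial_\e g^{ip})g^{jq}\bg(\AAA^f_{ij},\AAA^f_{pq}) - |\AAA^f|^2\,\bg(V,\HHH^f)\Big)\,d\mu_f.
\]

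Next I would treat the four resulting terms separately. The principal term $2\int g^{ip}g^{jq}\bg(\grad^2_{ij}V,\AAA^f_{pq})$ is integrated by parts twice with respect to the normal connection $\grad$; since the metric is covariantly constant and $\grad$ is metric for $\bg$ on the normal bundle, the Laplacian lands on $\AAA^f$ and produces $2\int g^{ip}g^{jq}\bg(V,\grad^2_{ij}\AAA^f_{pq})$, matching the first term in \eqref{varen}. The term $-2\int g^{ip}g^{jq}g^{kl}\bg(V,\AAA^f_{jk})\bg(\AAA^f_{il},\AAA^f_{pq})$ is already algebraic in $V$; after relabelling indices it should combine with the metric-variation contribution $2(\partial_\e g^{ip})g^{jq}\bg(\AAA^f_{ij},\AAA^f_{pq}) = 4\,g^{ia}g^{pb}\bg(V,\AAA^f_{ab})g^{jq}\bg(\AAA^f_{ij},\AAA^f_{pq})$ to give the quartic term $-g^{ij}g^{kl}g^{pq}\bg(\AAA^f_{ip},\AAA^f_{lq})\AAA^f_{jk}$ paired against $V$ (up to the overall factor $2$; I expect the signs and the factor of $4$ versus $2$ to work out after a careful index contraction, which is the step most prone to bookkeeping errors). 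The curvature term $2\int g^{ip}g^{jq}\bg((\bar{\RRR}(V,\partial_i f)\partial_j f)^\perp,\AAA^f_{pq})$ pairs against $V$ once one moves $V$ out of the curvature slot using the symmetries of $\bar{\RRR}$: $\bg(\bar{\RRR}(V,\tilde X)\tilde Y,\tilde Z) = \bg(\bar{\RRR}(\tilde Y,\tilde Z)V,\tilde X)$ or, depending on sign conventions, $= -\bg(\bar{\RRR}(\tilde X, V)\tilde Y,\tilde Z)$, giving the term $\bar{\RRR}(\AAA^f_{ij},\partial_j f)\partial_i f$ of \eqref{varen}; note the $\perp$ may be dropped against $V$ since $V$ is normal. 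Finally the term $-\int|\AAA^f|^2\bg(V,\HHH^f)$ is already in the desired form and supplies the last summand $-\tfrac12|\AAA^f|^2\HHH^f$ (the factor $\tfrac12$ matching the overall $2$ pulled out front in \eqref{varen}).

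The main obstacle will be the careful tracking of the numerical coefficients and signs in the second and third terms: the metric variation produces a factor that must be balanced against the corresponding piece of \eqref{var2ff} so that everything collapses to the single quartic expression with the stated coefficient, and the curvature symmetrisation must be done with the precise convention for $\bar{\RRR}$ fixed in the Preliminaries so that the sign in front of $\bar{\RRR}(\AAA^f_{ij},\partial_j f)\partial_i f$ is correct. One should also verify that all the integrations by parts generate no boundary terms, which is immediate since $\Sigma$ is closed. Once these bookkeeping steps are settled, collecting the four contributions under a single integral and factoring out the $2$ gives exactly \eqref{varen}.
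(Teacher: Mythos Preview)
Your proposal is correct and follows the same route as the paper's own proof, which is extremely terse: it merely states that \eqref{varmass} and \eqref{var2ff} are ``standard computations'' and that \eqref{varen} then follows from \eqref{varmass}, \eqref{var2ff} and the Ricci equation \eqref{Ricci}, together with the normality of $V$. Your outline is precisely the natural expansion of that sentence: differentiate $|\AAA^f|^2 = g^{ip}g^{jq}\bg(\AAA^f_{ij},\AAA^f_{pq})$ and $d\mu_f$, insert \eqref{var2ff}, integrate the Hessian term by parts twice (no boundary terms since $\Sigma$ is closed), and rearrange the curvature term via the symmetries of $\bar\RRR$.

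The only point worth flagging is that the paper explicitly invokes the Ricci equation \eqref{Ricci}, whereas you handle the curvature term purely through the algebraic symmetries of $\bar\RRR$. In practice both lead to the same identification $\bg\big((\bar\RRR(V,\partial_i f)\partial_j f)^\perp,\AAA^f_{pq}\big)=\bg\big(V,\bar\RRR(\AAA^f_{pq},\partial_j f)\partial_i f\big)$, so this is a cosmetic difference rather than a genuinely distinct strategy. Your caveat about the bookkeeping in the quartic $\AAA$-terms (balancing the $-2$ from \eqref{var2ff} against the $+4$ from $\partial_\e g^{ip}$) is well placed; this is indeed where one has to be careful with index relabelling to land on the coefficient $-1$ in front of $g^{ij}g^{kl}g^{pq}\bg(\AAA^f_{ip},\AAA^f_{lq})\AAA^f_{jk}$, and the paper's reference to \eqref{Ricci} may also be serving to organise that cancellation.
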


\begin{proof}
Equations \eqref{varmass} and \eqref{var2ff} are deduced by standard computations. \eqref{varen} follows from \eqref{varmass}, \eqref{var2ff} and \eqref{Ricci}, taking into account that $\bg(\partial_l f (x , 0), V (x,0)) = 0$ for $l \in \{ 1,2 \}$ and $x \in \Sigma$.
\end{proof}

\begin{dfnz} \label{dfnz:A2speed}
For an immersion $F: \Sigma \to N^n$ we define
\be \label{A2speed}
\grad \mE (F) =  g^{ip} g^{jq}\grad^2_{pq} \AAA^F_{ij} - g^{ij} g^{kl} g^{pq} \bg(\AAA^F_{ip} , \AAA^F_{lq}) \AAA^F_{jk} + \bar{\RRR}(\AAA^F_{ij},\partial_j F) \partial _i F- \frac{1}{2} |\AAA^F|^2 \HHH^F \,.
\ee
\end{dfnz}

\begin{lemma}
For all $a,b,p,q \in \{ 1 , 2 \}$ it holds
\be \label{com2A}
\grad^2_{ab} \AAA^F_{pq} = \grad^2_{pq} \AAA^F_{ab} + P^0_3 + Q^{0,1}_1
\ee
and
\be \label{com4A}
\grad^2_{ab} \grad^2_{pq} \AAA^F_{pq} = \grad^2_{pq} \grad^2_{ab} \AAA^F_{pq} + Q^{2,1}_1 \,.
\ee
\end{lemma}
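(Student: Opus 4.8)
The plan is to prove the two commutator identities \eqref{com2A} and \eqref{com4A} by repeatedly applying the Ricci-type commutation formula for the normal connection and absorbing the resulting curvature terms into the $P$ and $Q$ polynomials using the bookkeeping rules \eqref{rule1}--\eqref{rule3}. For \eqref{com2A}, I would start from the general commutator of second covariant derivatives acting on a section of the normal bundle: for a normal tensor $T$,
\[
\grad^2_{ab} T_{pq} - \grad^2_{ba} T_{pq} = \RRR^{\perp}_{ab} * T + (\textrm{Christoffel/intrinsic curvature terms}) * T,
\]
where $\RRR^{\perp}$ is the curvature of the normal connection. The key point is that by the Ricci equation \eqref{Ricci}, $\RRR^{\perp}(X,Y)V$ equals a sum of a term quadratic in $\AAA^F$ (contracted with $V$) and a term of the form $(\bar{\RRR}(\tilde X,\tilde Y)V)^{\perp}$. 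Taking $V = \AAA^F_{pq}$ (or more precisely handling the two lower indices via the induced connection on $T\Sigma \otimes (F_*T\Sigma)^{\perp}$), the first contribution is a product of three copies of $\AAA^F$ with no derivatives, i.e.\ a $P^0_3$ term, and the second is $\bar{\RRR}\circ F$ contracted with one copy of $\AAA^F$ (plus the injection $\i_\Sigma$ and copies of $DF$), which is exactly a $Q^{0,1}_0$, hence a fortiori a $Q^{0,1}_1$ term. The intrinsic Gauss curvature terms coming from commuting the $T\Sigma$ indices are themselves, via the Gauss equation, quadratic in $\AAA^F$ plus a $\bar{\RRR}\circ F$ term, so they too fall into $P^0_3 + Q^{0,1}_1$. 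Interchanging which pair of indices one differentiates first (i.e.\ passing from $\grad^2_{ab}\AAA^F_{pq}$ to $\grad^2_{pq}\AAA^F_{ab}$) costs at most finitely many such commutations, so \eqref{com2A} follows.

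For \eqref{com4A} I would iterate: write $\grad^2_{ab}\grad^2_{pq}\AAA^F_{pq}$ and move the outer pair $\grad^2_{ab}$ past the inner $\grad^2_{pq}$. Each elementary commutation of a $\grad$ past a $\grad$ produces a curvature operator ($\RRR^{\perp}$ or the intrinsic curvature) acting on the tensor $\grad \AAA^F$ or $\grad^2\AAA^F$; by \eqref{Ricci} and the Gauss equation this curvature operator is schematically $P^0_2 + \bar{\RRR}\circ F$. Multiplying by $\grad^{\le 2}\AAA^F$ and applying the multiplication rule \eqref{rule3} together with the derivative rules \eqref{rule1}--\eqref{rule2}, every error term has the form $\bgrad^{r}\bar{\RRR}\circ F * \grad^{i_1}\AAA^F * \cdots$ with total order $r + |i| + \nu = 3$, $|i|\le 2$, and $r \le 1$ — that is, a $Q^{2,1}_1$ term. (Terms with no $\bar{\RRR}$ factor, purely $P^?_?$ in $\AAA^F$, would a priori appear, but one checks the homogeneity forces $|i| \le 2$ and a total of four $\AAA^F$ factors with two derivatives distributed, which is again subsumed — in the conventions of this paper such purely polynomial remainders are apparently being counted inside the $Q$ class with $\bar\RRR$-degree zero, or they cancel; I would double-check the exact statement against the intended convention, since \eqref{com4A} as written lists only a $Q^{2,1}_1$ error.)

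The main obstacle is precisely this bookkeeping: one must verify that after all the commutations the curvature-derivative order $r$ never exceeds $1$ and the $\AAA^F$-derivative order never exceeds $2$. The reason $r \le 1$ holds is that we only ever differentiate $\bar{\RRR}\circ F$ at most once — the $\bar{\RRR}$ factors enter through \eqref{Ricci} or Gauss when commuting two derivatives, and in \eqref{com4A} we commute derivatives that together carry only four differentiations on $\AAA^F$, so at worst one derivative lands on a curvature term before a further commutation would be needed; a careful count using $r + |i| + \nu = k+l$ with $k=2$, $l=1$ confirms $r\le 1 = m$. So the proof is essentially: (i) state the elementary normal-bundle commutation formula; (ii) substitute the Ricci equation \eqref{Ricci} and the Gauss equation to identify the curvature terms as $P^0_2$ plus $\bar{\RRR}$-terms; (iii) apply rules \eqref{rule1}--\eqref{rule3} to read off the polynomial type of every error; (iv) count derivatives to pin down the indices. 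I would present (i)--(ii) explicitly and treat (iii)--(iv) as the routine verification indicated by ``standard computations''.
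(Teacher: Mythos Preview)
Your proposal has a genuine gap in the argument for \eqref{com2A}. Commuting two covariant derivatives via the Ricci equation (and the Gauss equation for the tangential part) only allows you to swap the two \emph{derivative} indices, i.e.\ to pass from $\grad^2_{ab}$ to $\grad^2_{ba}$ at the cost of a $P^0_3 + Q^{0,1}$ error. But the identity \eqref{com2A} asks you to exchange the derivative pair $(a,b)$ with the pair of indices $(p,q)$ carried by $\AAA^F$ itself, and no amount of curvature commutation does that: after any sequence of commutations of $\grad$'s, the tensor being differentiated is still $\AAA^F_{pq}$, never $\AAA^F_{ab}$. The missing tool is the Codazzi equation \eqref{Codazzi}, which in $P,Q$ language reads
\[
\grad_b\AAA^F_{pq} = \grad_p\AAA^F_{bq} + Q^{0,0},
\]
and is precisely what lets you trade a derivative index for an $\AAA^F$ index. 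The paper's proof uses Codazzi twice (first to move $b$ into $\AAA$, then $a$) together with one derivative commutation via \eqref{Ricci}:
\[
\grad^2_{ab}\AAA^F_{pq} \xrightarrow{\text{Codazzi}} \grad^2_{ap}\AAA^F_{bq} + Q^{0,1}_1 \xrightarrow{\text{Ricci}} \grad^2_{pa}\AAA^F_{bq} + P^0_3 + Q^{0,1}_1 \xrightarrow{\text{Codazzi}} \grad^2_{pq}\AAA^F_{ab} + P^0_3 + Q^{0,1}_1.
\]
Without Codazzi, your scheme for \eqref{com2A} cannot close.

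Your treatment of \eqref{com4A} is essentially correct, since there one really is only commuting derivative operators and the Ricci/Gauss substitutions you describe suffice. Your hesitation about pure $P^2_3$ terms appearing in the commutator is well-founded: commuting $\grad^2_{ab}$ past $\grad^2_{pq}$ does produce terms of type $\AAA*\AAA*\grad^2\AAA = P^2_3$ via \eqref{Ricci}, and these are not absorbed by $Q^{2,1}_1$ under the paper's conventions (every $Q$ term carries at least one factor of $\bar\RRR$). This is a minor imprecision in the statement of \eqref{com4A} rather than in your argument; in the only place \eqref{com4A} is used (the derivation of \eqref{strev2ff}) a $P^2_3$ term is already present for other reasons, so nothing is lost downstream.
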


\begin{proof}
We start by noticing that \eqref{Codazzi} can be written in the form
\be \label{Codazzi-Q}
\grad_b \AAA^F_{pq} - \grad_p \AAA^F_{bq} = (\bar{\RRR}_{bpq})^{\perp} = Q^{0,0}
\ee
and \eqref{Ricci} give
\be
\begin{split}
\grad^2_{ab} \AAA^F_{pq}  & \overset{\eqref{Codazzi-Q}} = \grad_a (\grad_p \AAA^F_{bq} + Q^{0,0})\overset{\eqref{rule2}} =  \grad^2_{ap} \AAA^F_{qb} + Q^{0,1}_1\\
                          & \overset{\eqref{Ricci}} = \grad^2_{pa}\AAA^F_{qb} + P^0_3 + Q^{0,0} * \AAA + Q^{0,1}_1  \overset{\eqref{rule3} - \eqref{Codazzi-Q}} = \grad^2_{pq} \AAA^F_{ab} + P^0_3 + Q^{0,1}_1\\
\end{split}
\ee
which is \eqref{com2A}. Equation \eqref{com4A} follows along the same line.
\end{proof}

\begin{prop}
For an interval $I \in \R$, let $f : \Sigma \times I \to N^n$ and assume $\partial_t f = - \grad \mE (f)$ for all $t \in I$. Then it holds

\be \label{strev2ff}
\partial_t \AAA^f_{ij} = \D^2 \AAA^f_{ij} + P^2_3 + P^0_5 + Q^{2,1} + Q^{0,1}_{R*R}\,.
\ee
\end{prop}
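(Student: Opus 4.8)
The plan is to read the evolution equation for $\AAA^f$ directly off the first-variation formula \eqref{var2ff}, specialised to the speed $V=\partial_t f=-\grad\mE(f)$, and then to identify, summand by summand, the type of each term that appears. Inserting this $V$ into \eqref{var2ff} gives
\[
\partial_t\AAA^f_{ij}=\grad^2_{ij}V-g^{kl}\AAA^f_{il}\,\bg(V,\AAA^f_{jk})+\bigl(\bar\RRR(V,\partial_i f)\partial_j f\bigr)^{\perp},
\]
and I would expand $-V=\grad\mE(f)$ by means of \eqref{A2speed}, splitting it into its fourth-order part $g^{ap}g^{bq}\grad^2_{pq}\AAA^f_{ab}$, its two cubic parts (each of type $P^0_3$ in the notation of Definition \ref{PQnotation}), and its curvature part $\bar\RRR(\AAA^f_{ab},\partial_b f)\partial_a f$, which is of type $Q^{0,1}_{(0)}$, carrying one undifferentiated $\bar\RRR$, one undifferentiated $\AAA^f$, and a pair of $DF$-factors.

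The core of the argument is the fourth-order contribution $-\grad^2_{ij}\bigl(g^{ap}g^{bq}\grad^2_{pq}\AAA^f_{ab}\bigr)$. Using Codazzi in the form \eqref{Codazzi-Q}, the inner expression, the double divergence of $\AAA^f$, equals $\D\HHH^f$ modulo a term of type $Q^{1,0}_{(1)}$. Commuting the outer Hessian $\grad^2_{ij}$ through by means of \eqref{com2A} and \eqref{com4A}, and rewriting the intrinsic curvature of $\Sigma$ produced by the commutators as $\AAA^f*\AAA^f$ plus $\bar\RRR$ via the Gauss and Ricci \eqref{Ricci} equations, one arrives at $\D\bigl(\grad^2_{ij}\HHH^f\bigr)$ up to remainders of type $P^2_3$ and $Q^{2,1}$; since \eqref{com2A} gives $\grad^2_{ij}\HHH^f=\D\AAA^f_{ij}+P^0_3+Q^{0,1}$, applying $\D$ and using the calculus rules \eqref{rule1}--\eqref{rule2} produces $\D^2\AAA^f_{ij}+P^2_3+Q^{2,1}$. \emph{The delicate point}, and the main obstacle to honest bookkeeping, is to check that no remainder of the forbidden shape $\bar\RRR*\grad^3\AAA^f$ survives: this holds because every derivative that is not absorbed by an $\AAA^f$-factor is absorbed either by a $\bar\RRR$-factor, raising the $(m)$-index in $Q^{k,l}_{(m)}$, or by a $DF$-factor, producing a fresh undifferentiated $\AAA^f$ (this is precisely rule \eqref{rule2}), so that the commutators always peel off pairs of derivatives into curvature factors and the derivative order on any single $\AAA^f$ in a remainder never exceeds $2$.

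The remaining contributions are routine. Differentiating the cubic parts of $\grad\mE(f)$ twice gives $\grad^2_{ij}(P^0_3)=P^2_3$ by \eqref{rule1}, and differentiating the curvature part twice gives $\grad^2_{ij}(Q^{0,1}_{(0)})=Q^{2,1}_{(2)}$, a term of type $Q^{2,1}$. In $-g^{kl}\AAA^f_{il}\,\bg(V,\AAA^f_{jk})$ one substitutes the three pieces of $V$: the fourth-order piece yields $\AAA^f*\AAA^f*\grad^2\AAA^f=P^2_3$, the cubic piece yields $\AAA^f*\AAA^f*P^0_3=P^0_5$, and the curvature piece yields, by \eqref{rule3}, $\AAA^f*\AAA^f*Q^{0,1}=Q^{0,3}$, which, carrying no derivative on $\AAA^f$, is a special case of $Q^{2,1}$ straight from Definition \ref{PQnotation}. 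Likewise, in $\bigl(\bar\RRR(V,\partial_i f)\partial_j f\bigr)^{\perp}$ the fourth-order and cubic pieces of $V$ give $Q^{2,1}$ and $Q^{0,3}\subset Q^{2,1}$, while the curvature piece of $V$ produces the term $\bar\RRR*\bar\RRR*\AAA^f*DF*DF$, the only one carrying two copies of $\bar\RRR$, which is exactly of type $Q^{0,1}_{R*R}$. Collecting $\D^2\AAA^f_{ij}$ together with all the $P$- and $Q$-remainders and absorbing numerical coefficients into the universal linear combinations then gives \eqref{strev2ff}.
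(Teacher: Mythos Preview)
Your proposal is correct and follows essentially the same strategy as the paper: write $\grad\mE(f)$ in the structural form $g^{ap}g^{bq}\grad^2_{ab}\AAA^f_{pq}+P^0_3+Q^{0,1}_0$, plug this into the variation formula \eqref{var2ff}, and reduce the leading fourth-order term to $\Delta^2\AAA^f_{ij}$ via the commutator identities \eqref{com2A}--\eqref{com4A}, absorbing all remainders into the $P$- and $Q$-symbols. The only cosmetic difference is that you pass through $\Delta\HHH^f$ (applying Codazzi first, then commuting $\grad^2_{ij}$ with $\Delta$), whereas the paper commutes $\grad^2_{ij}$ with $\grad^2_{ab}$ directly via \eqref{com4A} and then applies \eqref{com2A} to the inner factor; the two orderings produce the same error terms.
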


\begin{proof}
We start by noticing that
\be \label{strV}
\grad \mE (f) = g^{ai}g^{bj} \grad^2_{ab} \AAA^f_{ij}  + P^0_3 + Q^{0,1}_0 \,,
\ee
From \eqref{var2ff} it follows that
\be \label{strevA}
\partial_t \AAA^f_{ij} =  \grad^2_{ij} \grad \mE (f) + P^0_2 * \grad \mE (f) + Q^{0,0}_0 * \grad \mE (f).
\ee
Putting together \eqref{com2A}, \eqref{com4A}, \eqref{strevA} and \eqref{strV} we get
\be
\begin{split}
\partial_t \AAA^f_{ij} &\overset{\eqref{strV}} = \grad^2_{ij} (g^{ap}g^{bq} \grad^2_{ab} \AAA^f_{pq}  + P^0_3 + Q^{0,1}_0) + P^0_2 * (g^{ap}g^{bq} \grad^2_{ab} \AAA^f_{pq}  + P^0_3 + Q^{0,1}_0)\\ 
                                     & \quad + Q^{0,0}_0 * (g^{ap}g^{bq} \grad^2_{ab} \AAA^f_{pq}  + P^0_3 + Q^{0,1}_0)\\
                                     & \overset{\eqref{com4A}} = g^{ap}g^{bq}\grad^2_{ab} \grad^2_{ij} \AAA^f_{pq} + P^2_3 + P^0_5 + \grad^2 Q^{0,1}_0 + Q^{0,3}_0 + Q^{2,1}_1 + Q^{0,1}_{R*R}\\
                                     & \overset{\eqref{com2A}} = \Delta^2 \AAA^f_{ij} + \grad^2 (P^0_3 + Q^{0,1}_1) + P^2_3 + P^0_5 + \grad^2 Q^{0,1}_0 + Q^{0,3}_0 + Q^{2,1}_1 + Q^{0,1}_{R*R}\\
                                     & = \Delta^2 \AAA^f_{ij} + P^2_3 + P^0_5 + Q^{2,1}_1 + Q^{0,1}_{R*R} \,. \\
\end{split}
\ee
\end{proof}
%
%
\section{Lifespan Theorem}
In this section we use some results proven in \cite{Link13} to obtain an estimate on the lifespan of a $\grad \mE-$flow in terms of the concentration of the $L^2-$norm of the second fundamental form of its initial datum.\\

\begin{Definition}
Let $f : \Sigma \times [0,T) \to (N^n,\bg)$ be a smooth one parameter family of isometric immersions of a  closed surface into a closed Riemannian manifold. We define the concentration of $\AAA^f$ at time $t$ and scale $\rho$ as
\be
\chi_f(\rho , t) := \sup_{x \in N^n} \int_{{f(\cdot , t)}^{-1} (\overline{B^{\bg}_{\rho}(x)})} |\AAA^{f(\cdot , t)}|^2 d \mu_{f(\cdot , t)} \,,
\ee
where $B^{\bg}_{\rho}(x)$ is the geodesic ball with centre at $p$ and radius $\rho$, with respect to the metric $\bg$.
\end{Definition}
\begin{Remark}
Equation \eqref{strev2ff} has the same structure as Equation (2.10) in \cite{Link13}. Thus, the following result, proved in \cite{KuSc02}  in the case $N^n = \R^n$ and in \cite{Link13} for an arbitrary closed ambient manifold, holds true also for  $\grad \mE-$flows.\\
\end{Remark}
\begin{thm} \label{th:ConcComp}
Given an isometric immersion $f_0 : (\Sigma, g) \to (N^n,\bg)$ of a closed surface into a closed Riemannian manifold, let $f: \Sigma \times [0,T) \to N^n$ be the maximal $\grad \mE-$flow with initial datum $f_0$.\\
For $\rho > 0$ and $\e > 0$, define
$$
t^+_{\e}(\rho) := \sup\{t \geq 0 : \chi(\rho,s) < \e^2, s \in [0,t) \} \,.
$$
Then there exists $\e_0((N^n, \bg)) >0$ such that either $T = t^+_{\e_0}(\rho) = \infty$, or there exist a constant $C$ for which
$$
T  > t^+_{\e_0}(\rho) \geq C \rho^4 \log \Bigg( \frac{C \e^2_0}{\chi(\rho,0) + \rho^4 ||\bgrad \bar{\RRR}||^2_{L^{\infty}(N,\bg)}(\mu_{f_0}(\Sigma) + \rho^2 \mW(f_0))} \Bigg)\,.
$$
\end{thm}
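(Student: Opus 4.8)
The plan is to follow the blow-up/interpolation scheme of Kuwert–Schätzle as adapted to the Riemannian setting in \cite{Link13}, since by the preceding Remark the evolution equation \eqref{strev2ff} has exactly the structure of Equation (2.10) in \cite{Link13}. The first step is to localize: fix a point $x \in N^n$ and a cutoff function $\gamma$ supported in a geodesic ball of radius $\rho$ around $x$, pulled back along $f(\cdot,t)$, and consider the weighted energy $\int_\Sigma |\AAA^{f}|^2 \gamma^4 \, d\mu_f$. Differentiating in $t$, using \eqref{strev2ff} for $\partial_t \AAA^f$ and \eqref{varmass} for $\partial_t d\mu_f$, one integrates by parts twice against the biharmonic term $\Delta^2 \AAA^f$. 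The terms hitting the cutoff produce lower-order factors of $|\grad\gamma|\le c/\rho$, $|\grad^2\gamma|\le c/\rho^2$, etc., while the curvature terms $Q^{2,1}$ and $Q^{0,1}_{R*R}$ contribute factors controlled by $\|\bgrad\bar{\RRR}\|_{L^\infty}$ together with powers of $\AAA^f$ and factors of $DF$ (which are bounded since $f$ is an isometric immersion). This yields a differential inequality of the form
\be
\frac{d}{dt}\int_\Sigma |\AAA^f|^2 \gamma^4 \, d\mu_f + \frac12 \int_\Sigma |\grad^2 \AAA^f|^2 \gamma^4 \, d\mu_f \leq c \int_{[\gamma>0]} |\AAA^f|^2 \big(|\AAA^f|^4 + \rho^{-4}\big)\, d\mu_f + (\text{curvature terms})\,,
\ee
after absorbing the good fourth-derivative term via the Gauss–Bonnet-type interpolation estimates.

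The second step is to control the right-hand side while the concentration is small. Using the Michael–Simon Sobolev inequality on $\Sigma$ (valid in the Riemannian target up to controlled error terms, as in \cite{Link13}) together with the hypothesis $\chi_f(\rho,s) < \e^2$ for $s \in [0,t)$, the multiplicative terms $\int |\AAA^f|^6 \gamma^4$ are estimated by $\e^2$ times $\int |\grad^2 \AAA^f|^2 \gamma^4$ plus lower-order scale-broken terms; choosing $\e_0$ small depending only on $(N^n,\bg)$ lets one absorb these into the left-hand side. The curvature contributions are bounded, via Young's inequality and the Sobolev inequality, by a constant multiple of $\rho^{-4}\chi_f(\rho,t) + \rho^{-4}\|\bgrad\bar{\RRR}\|_{L^\infty}^2\big(\mu_f(\Sigma) + \rho^2 \mW(f)\big)$ plus absorbable terms; here one uses that $\mu_f(\Sigma)$ and $\mW(f)$ remain controlled along the flow (the area is almost-monotone and the Willmore energy is bounded by $\mE$ up to a topological term, via $|\HHH|^2 \le 2|\AAA|^2$ and Gauss–Bonnet). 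One arrives at an inequality of the shape
\be
\frac{d}{dt}\,\psi(t) \leq c\,\rho^{-4}\,\psi(t) + c\,\rho^{-4}\,\e_0^{-2}\,\Lambda\,, \qquad \psi(t) := \sup_{x} \int_\Sigma |\AAA^f|^2 \gamma^4\, d\mu_f\,,
\ee
where $\Lambda := \chi_f(\rho,0) + \rho^4\|\bgrad\bar{\RRR}\|_{L^\infty}^2(\mu_{f_0}(\Sigma) + \rho^2\mW(f_0))$.

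The third step is the integration of the differential inequality: as long as $\psi(t) \le \e_0^2$ we may run Gronwall, and tracking when $\psi$ can first reach the threshold $\e_0^2$ starting from $\psi(0)\lesssim\chi_f(\rho,0)$ gives precisely the stated logarithmic lower bound $t^+_{\e_0}(\rho) \ge C\rho^4 \log\big(C\e_0^2/\Lambda\big)$; if the argument of the log is $\ge 1$ the flow never reaches the threshold and one gets $T = t^+_{\e_0}(\rho) = \infty$. The main obstacle is the second step: making the Riemannian Michael–Simon/Sobolev machinery and the absorption of the curvature terms genuinely uniform, i.e. verifying that all the error terms coming from the ambient curvature in the integration-by-parts and in the Sobolev inequality can be packaged into the single quantity $\|\bgrad\bar{\RRR}\|_{L^\infty}^2(\mu_{f_0}(\Sigma)+\rho^2\mW(f_0))$ with the displayed scaling in $\rho$. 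Since \eqref{strev2ff} matches the structural form required in \cite{Link13}, this step is essentially a citation, and the remaining work is bookkeeping of the cutoff and curvature weights.
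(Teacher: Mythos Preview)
Your proposal is correct and matches the paper's approach: the paper does not supply its own proof of this theorem but simply observes in the preceding Remark that \eqref{strev2ff} has the same structure as Equation (2.10) in \cite{Link13}, so the lifespan estimate proved there (and in \cite{KuSc02} for $N^n=\R^n$) applies verbatim to $\grad\mE$-flows. Your sketch is an accurate expansion of precisely that Kuwert--Sch\"atzle/Link localization and interpolation argument.
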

%
%
\section{Existence of the Blowup}
In this section we prove an existence result for blowups of $\grad \mE-$flows.\\
We start by stating a simplified version of a compactness theorem originally due to Langer, generalized by Breuning in \cite{Breu12} and extended by Cooper to the Riemannian setting, which will be used in the following arguments.
\begin{thm} \label{Cooper} \cite[Theorem 1.2]{Coop11}
Given a closed surface $\Sigma$, a sequence of closed Riemannian manifold  $\{(N^n,\bg_i)\}_{i \in \N}$ with uniformly bounded geometry and two sequences of points $\{p_i\}_{i \in \N} \subset \Sigma$, $\{x_i\}_{i \in \N} \subset N^n$, let $F_i : (\Sigma,g_i) \to (N^n,\bg_i)$ be a sequence of isometric proper immersions such that $F_i(p_i) = x_i$. Suppose that
\be \label{mbound}
\mu_{F_i}(B_R(x_i)) \leq C(R) \quad \textrm{for any} \quad R > 0\,,
\ee
\be \label{Abound}
||\grad^k \AAA^{F_i}||_{L^{\infty}}\leq C(k) \quad \textrm{for any} \quad  \textrm{and} \quad k \in \N \,,
\ee
where the covariant derivatives are taken with respect to the Levi--Civita connection associated to $g_i$.\\
Then there exist a surface $\hSig$, a complete Riemannian manifold $(M^n,\bg)$ and two points $p \in \hSig$ and $x \in M^n$ such that
\begin{itemize}
\item There exixts an increasing exhaustion $\{W_i\}_{i \in \N}$ of $\hSig$ made of open relatively compact sets, and there are diffeomorphisms $\phi_i : W_i \to \Sigma$ with $\phi_i(p) = p_i$, such that for any $R>0$ it holds $\Sigma^{g_i}_R(p_i) \subset \phi_i(W_i)$, for all $i \geq i_0(R)$
\item There is an increasing exhaustion $\{V_i\}_{i \in \N}$ of $M^n$ made of open relatively compact sets, and diffeomorphisms $\psi_i : V_i \to N^n$ with $\psi_i(x) = x_i$, such that for any $R>0$ it holds $B^{\bg_i}_R(x_i) \subset \psi_i(V_i)$, for all $i \geq i_0(R)$
\item $\psi^*_i \bg_i \to \bg$ smoothly
\item $\phi_i(W_i) \subset \psi_i(V_i)$
\item There exist a proper immersion $\hat{F}: \hSig \to M^n$ such that $\psi^{-1}_i \circ F_i \circ \phi_i \to \hat{F}$ smoothly with respect to a global isometric embedding of $(M^n,\bg)$ into a suitable Euclidean space $\R^K$. The immersion $\hF$ also satisfies \eqref{mbound} and \eqref{Abound} with respect to $x$.
\end{itemize}
\end{thm}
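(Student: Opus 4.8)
The plan is to adapt the graph-representation argument of Langer and Breuning to the setting of varying ambient manifolds: one represents the immersions locally as graphs carrying uniform higher-order estimates, then combines an Arzel\`a--Ascoli argument with a diagonal extraction over an exhaustion, handling the sequence $(N^n,\bg_i)$ in the Cheeger--Gromov sense.

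\textbf{Step 1: Uniform local graph representations.} The bound \eqref{Abound} for $k=0$ together with the uniformly bounded geometry of $(N^n,\bg_i)$ produces a radius $\rho_0>0$, independent of $i$, such that for every $q\in\Sigma$, reading $F_i$ in $\bg_i$-geodesic normal coordinates centred at $F_i(q)$ (identified with a neighbourhood of the origin in $\Rn$), the restriction of $F_i$ to the intrinsic ball $\Sigma^{g_i}_{\rho_0}(q)$ is an embedded graph over a disc of radius $\rho_0$ in the tangent plane $T_{F_i(q)}(F_i(\Sigma))\subset\Rn$, of a function $u_i^q$ with $|u_i^q|+|\nabla u_i^q|\le C\rho_0$. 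Differentiating the prescribed-second-fundamental-form equation for the graph and feeding in \eqref{Abound} for all $k$ together with the uniform bounds on $\bgrad^r\bar\RRR$ coming from bounded geometry, one obtains $\|u_i^q\|_{C^k}\le C(k)$ for every $k$, uniformly in $i$ and $q$. Since on this scale intrinsic and extrinsic distance are comparable (again by the $\AAA$-bound), the area bound \eqref{mbound} gives a uniform upper bound, depending only on $R$, for the number of such graph sheets meeting any fixed ball $B^{\bg_i}_R(x_i)$.

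\textbf{Step 2: Extraction and gluing.} Fix the exhaustion of $N^n$ by the balls $B^{\bg_i}_R(x_i)$. For fixed $R$, cover the part of $F_i(\Sigma)$ inside $B^{\bg_i}_R(x_i)$ by finitely many graph neighbourhoods as in Step 1, the number being controlled uniformly by \eqref{mbound}. After passing to a subsequence the pulled-back metrics $\psi_i^*\bg_i$ converge in $C^\infty_{\mathrm{loc}}$ to a limit metric $\bg$ on a manifold $M^n$ assembled as the Cheeger--Gromov limit with the maps $\psi_i$, and simultaneously the graph functions $u_i^q$ converge in $C^\infty$ on their discs. Diagonalising over $R\to\infty$ yields the increasing exhaustions $\{V_i\}$ of $M^n$ and $\{W_i\}$ of a surface $\hSig$, the diffeomorphisms $\psi_i:V_i\to N^n$ and $\phi_i:W_i\to\Sigma$, and, by gluing the limiting graph patches along their transition maps (which converge because the $u_i^q$ do), the limiting proper immersion $\hF:\hSig\to M^n$. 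The bounds \eqref{mbound} and \eqref{Abound} for $\hF$ pass to the limit, the first by lower semicontinuity of area under $C^\infty_{\mathrm{loc}}$ convergence and the second directly from the $C^\infty$ convergence of the graphs.

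\textbf{Main obstacle.} The difficulty is not in any single estimate but in the bookkeeping of the gluing under simultaneous convergence of domain and target. One must verify that the local limit patches fit into a \emph{single} smooth manifold $\hSig$, i.e.\ that the limiting transition maps satisfy the cocycle condition; that the $\phi_i$ can be chosen so that $\Sigma^{g_i}_R(p_i)\subset\phi_i(W_i)$ for all large $i$, which is precisely where the comparability of intrinsic and extrinsic distance from Step 1 enters; and, most delicately, that $\hF$ is \emph{proper} --- no family of graph sheets may run off to infinity in $\hSig$ while remaining in a fixed ball of $M^n$, an escape of mass that is ruled out exactly by the uniform area bound \eqref{mbound}. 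Ensuring that the "diagonal" maps $\psi_i^{-1}\circ F_i\circ\phi_i$ are well defined on the exhausting sets and converge, given that both $\phi_i$ and $\psi_i$ are themselves only obtained in the limit, is the part of the argument requiring the most care.
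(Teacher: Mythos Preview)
The paper does not give its own proof of this statement: it is quoted verbatim as \cite[Theorem 1.2]{Coop11} and used as a black box, so there is no argument in the paper to compare against. Your outline follows precisely the Langer--Breuning strategy (uniform local graph representations from the $C^\infty$ bounds on $\AAA$, Arzel\`a--Ascoli plus a diagonal argument over an exhaustion, gluing of the limiting patches) combined with Cheeger--Gromov convergence for the ambient sequence $(N^n,\bg_i)$, which is exactly Cooper's extension; as a sketch of that proof it is accurate, and the points you flag as the ``main obstacle'' (cocycle compatibility of the limit transition maps, properness of $\hF$ via the area bound, well-definedness of $\psi_i^{-1}\circ F_i\circ\phi_i$) are indeed where the technical work in \cite{Coop11} lies.
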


\begin{Remark}
From now on, a sequence of proper immersions $F_i : \Sigma \to N^n$ converging as in Theorem \ref{Cooper} to a proper immersion $\hat{F}: \hSig \to M^n$, will be denoted by short with $F_i \to  \hat{F}$. 
\end{Remark}

\begin{lemma} \label{semic_A2}
Under the hypothesis of Theorem \ref{Cooper} assume that it holds
\be \label{sch}
\int_{\Sigma} |\AAA^{F_i}|^2 d \mu_{F_i} \leq C < \infty \,.
\ee
Then it holds
\be \label{LowerSc}
\int_{\hSig}|\AAA^F|^2 d \mu_{\hF} \leq \liminf_{i \to \infty} \int_{\Sigma} |\AAA^{F_i}|^2 d \mu_{F_i}\,.
\ee
\end{lemma}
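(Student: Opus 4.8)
The plan is to exploit the smooth convergence from Theorem \ref{Cooper} together with a standard exhaustion-and-cutoff argument, treating $|\AAA|^2\,d\mu$ as a sequence of Radon measures and using lower semicontinuity of mass under a local cutoff. Let me lay out the steps.

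\textbf{Step 1: Reduce to a local statement on relatively compact sets.} Fix $R>0$ and work on the relatively compact open set $W_R\subset\hSig$ from the exhaustion. For $i\geq i_0(R)$ the diffeomorphisms $\phi_i:W_i\to\Sigma$ are defined on $W_R$, and the pulled-back immersions $\psi_i^{-1}\circ F_i\circ\phi_i$ converge smoothly to $\hF$ on $W_R$; moreover $\psi_i^*\bg_i\to\bg$ smoothly. Consequently the pulled-back induced metrics $\phi_i^*g_i$ converge smoothly to $\hat g:=\hF^*\bg$ on $W_R$, and likewise the pulled-back second fundamental forms $\phi_i^*\AAA^{F_i}$ (viewed as sections over $W_R$ via the isometric embedding into $\R^K$, so that all the derivatives in question are genuinely those of maps into a fixed Euclidean space) converge smoothly to $\AAA^{\hF}$. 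In particular the integrands $|\AAA^{F_i}|^2\circ\phi_i$ (computed with $\phi_i^*g_i$) converge uniformly on $W_R$ to $|\AAA^{\hF}|^2$, and the area forms $\phi_i^*d\mu_{F_i}$ converge to $d\mu_{\hF}$.

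\textbf{Step 2: Pass to the limit on $W_R$ and use that $\Sigma$ exhausts.} By the change of variables formula along $\phi_i$ (a diffeomorphism onto its image $\phi_i(W_R)\subset\Sigma$),
\be
\int_{W_R}|\AAA^{F_i}\circ\phi_i|^2\,\phi_i^*d\mu_{F_i}=\int_{\phi_i(W_R)}|\AAA^{F_i}|^2\,d\mu_{F_i}\leq\int_{\Sigma}|\AAA^{F_i}|^2\,d\mu_{F_i}\,,
\ee
where the inequality uses that $|\AAA^{F_i}|^2\geq0$ and $\phi_i(W_R)\subset\Sigma$. The left-hand side converges, by Step 1 (uniform convergence of a bounded integrand against weakly-$*$ convergent finite measures on the fixed compact set $\overline{W_R}$), to $\int_{W_R}|\AAA^{\hF}|^2\,d\mu_{\hF}$. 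Taking $\liminf_{i\to\infty}$ yields
\be
\int_{W_R}|\AAA^{\hF}|^2\,d\mu_{\hF}\leq\liminf_{i\to\infty}\int_{\Sigma}|\AAA^{F_i}|^2\,d\mu_{F_i}\,.
\ee

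\textbf{Step 3: Let $R\to\infty$.} Since $\{W_i\}$ is an increasing exhaustion of $\hSig$, monotone convergence gives $\int_{\hSig}|\AAA^{\hF}|^2\,d\mu_{\hF}=\lim_{R\to\infty}\int_{W_R}|\AAA^{\hF}|^2\,d\mu_{\hF}$, and the right-hand side of the previous display does not depend on $R$, so \eqref{LowerSc} follows. (Note the hypothesis \eqref{sch} with a uniform bound $C$ ensures the $\liminf$ is finite, so the inequality is not vacuous.)

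\textbf{Main obstacle.} The only delicate point is Step 1–2: making precise in what sense the geometric quantities $|\AAA^{F_i}|^2$ and $d\mu_{F_i}$, which a priori live on different manifolds $(N^n,\bg_i)$ with varying metrics, converge — one must consistently transport everything to the fixed model $(M^n,\bg)\hookrightarrow\R^K$ and to $W_R$, and verify that the smooth convergence statement in Theorem \ref{Cooper} (which is phrased for the immersions themselves in $\R^K$) really does propagate to smooth convergence of $\phi_i^*g_i\to\hat g$ and of the second fundamental forms. Once that bookkeeping is in place the analytic content is merely uniform convergence of a continuous integrand times weak-$*$ convergence of finite measures on a compact set, which forces convergence of the integrals; there is no genuine semicontinuity loss on each $W_R$, and the inequality in \eqref{LowerSc} arises solely from discarding the mass of $|\AAA^{F_i}|^2\,d\mu_{F_i}$ outside $\phi_i(W_R)$ and then letting $R\to\infty$.
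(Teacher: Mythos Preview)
Your proof is correct and follows essentially the same approach as the paper: both fix a relatively compact set in $\hSig$, use the locally smooth convergence of the immersions (and of the ambient metrics $\psi_i^*\bg_i\to\bg$) to pass to the limit in the local integral, bound that local integral by the global one on $\Sigma$ via nonnegativity of the integrand, take the $\liminf$, and then exhaust $\hSig$. The only cosmetic difference is that the paper exhausts by the sets $\hSig_R(x)=\hF^{-1}(B_R(x))$ while you exhaust by the $W_R$ coming directly from Theorem~\ref{Cooper}; both choices serve the same purpose.
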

\begin{proof}
Since the surface $\hSig$ is in general not compact, the $L^2-$norm of its second fundamental form is given by
\be \label{LimFiniteA2}
\int_{\hSig}|\AAA^{\hF}|^2 d \mu_{\hF} = \lim_{R \to \infty} \int_{\hSig_R(x)} |\AAA^{\hF}|^2 d \mu_{\hF}.
\ee
We fix now $R>0$ and let $\e > 0$ be arbitrary. By \eqref{sch} and the locally smooth convergence of $F_i$ to $F$, we deduce that there exist an $i_{\e} \in \N$ such that for all $i > i_{\e}$ it holds
\be
\Big| \int_{\hSig_R(x)} |\AAA^{\hF}|^2 d \mu_{\hF} - \int_{\phi_i (\hSig_R(x))} |\AAA^{\psi_i^{-1}\circ F_i}|^2 d\mu_{\psi_i^{-1} \circ F_i} \Big| < \e \,,
\ee
(where the $\phi_i$ and the $\psi_i$ are the diffeomorphisms in the definition of the local convergence) and thus
\be
\begin{split}
\int_{\hSig_R} |\AAA^F|^2 d\mu_F & \leq  \int_{\phi_i (\hSig_R(x))} |\AAA^{\psi_i^{-1}\circ F_i}|^2 d\mu_{\psi_i^{-1} \circ F_i} + \e \leq \int_{\Sigma} |\AAA^{\psi_i^{-1}\circ F_i}|^2 d\mu_{\psi_i^{-1} \circ F_i} + \e \\
                                & \leq \int_{\Sigma} |\AAA^{F_i}|^2 d\mu_{F_i} + 2\e
\end{split}
\ee
for all $i > i_{\e}$. Taking the  liminf for $i \to \infty$ on the right hand side and then the  limit for $R \to \infty$ on the left hand side, by the arbitrariness of $\e$ we obtain \eqref{LowerSc}.
\end{proof}

The following is the main result of this section.
\begin{thm} \label{th:Blowup}
Let $f : \Sigma \times [0,T) \to (N^n,\bg)$ be a maximal $\grad \mE-$flow of a closed surface into a closed Riemannian Manifold. Suppose that
\be
\mu(f) := \sup_{t \in [0,T)} \mu_{f(\cdot,t)}(\Sigma) < \infty\,,
\ee
and that the flow concentrates at $T \in (0,\infty]$, which means
\be \label{eq:concentration}
\e^2_T := \lim_{\rho \to 0} (\limsup_{t \to T} \chi(\rho,t)) > 0\,.
\ee
Then there exist sequences $t_i \to T$, $r_i \to 0$ and $x_i \in N^n$ such that the rescaled flows
\be
f_i : (\Sigma , g_i) \times \Big[- \frac{t_i}{r^4_i} , \frac{T-t_i}{r^4_i} \Big) \to (N^n,\bg_i) \qquad f_i(p , t) := f(p , t_i + r^4_i t) \,,
\ee
with $\bg_i = r^{-2}_i \bg$ and $g_i = {f_i(\cdot , t)}^* \bg_i$, converge locally smoothly on $\hSig \times \R$ to a static $\grad \mE-$flow represented by a static properly immersed Willmore surface $\hf : \hSig \to \R^n$, open sets with the property
\be \label{eq:nontrivial}
\int_{\hf^{-1}(\overline{B_1(0)})} |\AAA^{\hf}|^2 d\mu_{\hf} > 0.
\ee
More precisely, there exist a sequence $h_i \to \infty$, local charts $\psi_i : B^{\bg_i}_{h_i}(x_i) \to V_i \supset B^n_{2i^2}$, a surface $\hSig$, open sets $U_i \subset \subset U_{i+1}$ such that $\cup^{\infty}_{i=1} U_i = \hSig$, diffeomorphisms $\phi_i : U_i \to f^{-1}_i(\cdot , 0)(\psi^{-1}_i(B^n_i))$, open sets $\Sig_i \subset \Sigma$ such that $\phi_i(U_i) \subset \Sig_i$ and $f_i(\Sig_i,J) \subset B^{g_i}_{h_i}(x_i)$ for each $J \subset \subset \R$, so that $\bar{f}_i \circ \phi_i \to \hf$ locally smoothly on $\hSig \times \R$ as maps to $\R^n$.
\end{thm}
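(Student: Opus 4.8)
The plan is to follow the Willmore-flow blowup construction of Kuwert--Schätzle (as adapted to the Riemannian setting in \cite{Link13,MeWW13}), combining the lifespan bound of Theorem \ref{th:ConcComp}, interior estimates for $\grad\mE$-flows, and the compactness Theorem \ref{Cooper}. First I would use the concentration hypothesis \eqref{eq:concentration}: since $\e_T^2>0$, for a sequence $\rho_j\to 0$ there are times $s_j\to T$ with $\chi(\rho_j,s_j)$ bounded below by roughly $\e_T^2$. A standard extraction argument (choosing the radii and times carefully, as in \cite[Sect.~4]{KuSc02} / \cite{Link13}) produces sequences $t_i\to T$, $r_i\to 0$ and points $x_i\in N^n$ at which a definite amount of $|\AAA|^2$-energy, say $\geq \e_0^2$ with $\e_0$ the threshold of Theorem \ref{th:ConcComp}, concentrates in a ball of radius $r_i$ around $x_i$ at time $t_i$, while on a slightly smaller scale the concentration stays below $\e_0^2$ for times in an interval $[t_i-\theta r_i^4, t_i]$. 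The rescaling $f_i(p,t):=f(p,t_i+r_i^4 t)$ with $\bg_i=r_i^{-2}\bg$ is chosen precisely so that the rescaled flows are again $\grad\mE$-flows (the functional $\mE$ is scale-invariant in dimension two, so the equation \eqref{strev2ff} is preserved, with the curvature terms $Q_{(m)}$ and $Q_{R*R}$ now carrying extra powers of $r_i$ and hence tending to $0$), and so that the concentration bound on the unit scale is below $\e_0^2$ for $f_i$ on a fixed time interval.

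Next I would invoke the lifespan Theorem \ref{th:ConcComp} for the rescaled flows: on the unit scale the concentration is controlled, the rescaled area is locally bounded by $\mu(f)<\infty$ after rescaling (locally in $N^n$, which is all Theorem \ref{Cooper} needs), and the rescaled ambient curvature $\|\bgrad\bar{\RRR}\|_{L^\infty(N,\bg_i)}\to 0$; hence the rescaled flows exist on time intervals exhausting all of $\R$, and the logarithmic lifespan estimate together with the interior estimates for $\grad^k\AAA$ (the analogue of the Kuwert--Schätzle interior estimates, which hold because \eqref{strev2ff} has the same structure as Equation (2.10) in \cite{Link13}) give uniform bounds $\|\grad^k\AAA^{f_i(\cdot,t)}\|_{L^\infty}\leq C(k)$ on every compact time interval $J\subset\subset\R$. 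This verifies hypotheses \eqref{mbound} and \eqref{Abound} of Theorem \ref{Cooper} uniformly in $t\in J$, so a diagonal argument over an exhausting sequence of time intervals and the compactness theorem yield a limit proper immersion $\hf:\hSig\to M^n$ with $\psi_i^{-1}\circ f_i\circ\phi_i\to\hf$ locally smoothly on $\hSig\times\R$. The charts $\psi_i$, the exhaustions $U_i$, $W_i$, and the sets $\Sig_i$ are exactly those furnished by Theorem \ref{Cooper} (with $h_i\to\infty$ obtained from the local area and curvature bounds).

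It remains to identify the limit. Because the ambient curvature terms vanish in the limit, the limiting flow solves $\p_t\hf=-\grad\mW_{\mathrm{eucl}}(\hf)$ in $\R^n$, i.e.\ the limit is a Willmore-type flow for $\int|\AAA|^2$ in flat space; I would show it is \emph{static}. The standard energy argument works: the scale invariance of $\mE$ in dimension two, together with $\mu(f)<\infty$ and $\mE(f(\cdot,t))$ nonincreasing (which follows from \eqref{varen} and \eqref{Flow}), forces $\int_{t_i-\theta r_i^4}^{t_i}\int_\Sigma|\grad\mE(f)|^2\,d\mu_f\,dt\to 0$ after rescaling, so $\p_t\hf\equiv 0$ on $\hSig\times\R$; hence $\hf$ is a static $\grad\mE$-flow, i.e.\ a critical point of $\mE$ in $\R^n$, which by the Codazzi equation and Gauss--Bonnet type manipulations (or simply because in the noncompact finite-energy setting the critical points of $\int|\AAA|^2$ coincide with Willmore surfaces, using $\int|\AAA|^2=\int|\HHH|^2-2\int k_g$ locally) is a properly immersed Willmore surface. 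Finally the lower-semicontinuity Lemma \ref{semic_A2} guarantees $\int_{\hSig}|\AAA^{\hf}|^2\,d\mu_{\hf}<\infty$, and the choice of $x_i$ as a point of concentration on scale $r_i$ gives the non-triviality \eqref{eq:nontrivial} by passing the lower bound $\geq\e_0^2$ on $f_i^{-1}(\overline{B_1(0)})$ to the limit via locally smooth convergence.

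I expect the main obstacle to be the careful bookkeeping in the extraction of $t_i,r_i,x_i$ so that \emph{simultaneously} the rescaled flows satisfy a concentration bound $<\e_0^2$ on the unit scale over a fixed-length time interval (needed to apply Theorem \ref{th:ConcComp} and get long-time existence of the rescaled flows) and retain a definite amount of energy $\geq\e_0^2$ in $\overline{B_1(0)}$ at $t=0$ (needed for \eqref{eq:nontrivial}); this is the delicate pigeonhole/continuity-of-concentration step, and it relies on the precise form of the lifespan estimate together with the absolute continuity of $t\mapsto\chi_f(\rho,t)$ along the flow.
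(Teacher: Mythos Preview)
Your proposal is correct and follows essentially the same route as the paper. The paper in fact defers the entire extraction of $t_i,r_i,x_i$, the interior estimates, and the application of Theorem~\ref{Cooper} to \cite{Link13} (Theorem~0.3), and only spells out the final step---that the limit is a \emph{static Willmore} surface---as a separate lemma (Lemma~\ref{Staticflow}): one integrates $|\grad\mE(f_i\circ\phi_i)|^2$ over $U\times[\tau_1,\tau_2]$, bounds it by $\mE(f)\big|_{t_i+r_i^4\tau_1}-\mE(f)\big|_{t_i+r_i^4\tau_2}\to 0$ (since $\mE$ is monotone and both times tend to $T$), concludes $\grad\mE(\hf)=0$, and then invokes the Gauss--Bonnet theorem for complete surfaces with finite total curvature \cite{MuSv95} to identify critical points of $\mE$ in $\R^n$ with Willmore surfaces---exactly the energy and Gauss--Bonnet arguments you sketched.
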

\begin{Remark}
The condition $\mu(f) < \infty$ is always satisfied if $T < \infty$, since in this case it holds
\begin{equation*}
\mu_{f(\cdot,t)}(\Sigma) \leq C(f_0) \sqrt{t} \mE(f_0)^{1/2} + \mu_{f(\cdot,0)}(\Sigma) \,,
\end{equation*}
as it can be easily proven by means of the Cauchy-Schwarz inequality.\\
In the next section (Remark \ref{rmk:areabound}) we will make assumptions on the curvature tensor of the ambient manifold $(N^n,\bg)$ which ensure a uniform bound in time on $\mu_{f(\cdot,t)}(\Sigma)$ also in the case $T = \infty$.
\end{Remark}

The proof of Theorem \ref{th:Blowup} differs from the one in \cite{Link13} (Theorem 0.3) just in the last part, which we now prove.
\begin{lemma} \label{Staticflow}
The limit flow $\hf : \hSig \times \R \to \R^n$ is a static Willmore flow.
\end{lemma}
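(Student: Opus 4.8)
The strategy is the standard blowup argument from Kuwert--Schätzle and its Riemannian extension by Link: the rescaled flows $f_i$ converge to a limit flow $\hf$ which must be stationary because the available energy has been exhausted by the rescaling. I would argue by showing that the time-derivative of the energy of the limit flow vanishes identically, so that $\grad\mE(\hf)\equiv 0$, i.e. (since the rescaled ambient metrics $\bg_i=r_i^{-2}\bg$ converge to the flat metric on $\R^n$) the limit is a Willmore surface and the flow is static.

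First I would recall the global energy identity for a $\grad\mE$-flow: differentiating $\mE(f(\cdot,t))$ in $t$ and using $\partial_t f=-\grad\mE(f)$ together with \eqref{varen} gives
\[
\frac{d}{dt}\mE(f(\cdot,t)) = -2\int_{\Sigma}|\grad\mE(f(\cdot,t))|^2\,d\mu_{f(\cdot,t)} \leq 0 \,,
\]
so $\mE(f(\cdot,t))$ is nonincreasing; since $\mE\geq 0$ it has a limit as $t\to T$, hence $\int_0^T\!\!\int_\Sigma |\grad\mE|^2\,d\mu_f\,dt<\infty$. The key point is that this quantity is \emph{scale-invariant} in the relevant way: under the parabolic rescaling $f_i(p,t)=f(p,t_i+r_i^4 t)$ with $\bg_i=r_i^{-2}\bg$, one checks that $\grad\mE$ scales like $r_i^3\,\grad\mE$ (four derivatives of $\AAA$, which itself scales like $r_i^{-1}$, against a metric scaled by $r_i^{-2}$) while $d\mu$ picks up $r_i^{-2}$ and $dt$ picks up $r_i^{4}$, so that
\[
\int_{-t_i/r_i^4}^{(T-t_i)/r_i^4}\!\!\int_\Sigma |\grad\mE(f_i)|^2\,d\mu_{f_i}\,dt \;=\; \int_{0}^{T}\!\!\int_\Sigma |\grad\mE(f)|^2\,d\mu_f\,dt \;<\;\infty
\]
for every $i$, with the same finite value.

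Next I would localize and pass to the limit. Fix $R>0$ and $J\subset\subset\R$. By the locally smooth convergence $\bar f_i\circ\phi_i\to\hf$ on $\hSig\times\R$ (Theorem~\ref{th:Blowup}) and the smooth convergence $\psi_i^*\bg_i\to\delta$ on compact sets, all the geometric quantities — the metric $g_i$, the second fundamental form and its covariant derivatives, and hence $\grad\mE(f_i)$ — converge smoothly on $\phi_i(U_i)\cap(\text{preimage of }B_R)\times J$ to the corresponding Euclidean quantities for $\hf$. Therefore
\[
\int_{J}\!\!\int_{\hf^{-1}(\overline{B_R(0)})} |\grad\mE_{\R^n}(\hf)|^2\,d\mu_{\hf}\,dt \;=\; \lim_{i\to\infty}\int_{J}\!\!\int_{f_i^{-1}(\overline{B_R})} |\grad\mE(f_i)|^2\,d\mu_{f_i}\,dt \;\leq\; \limsup_{i\to\infty}\, \eta_i \,,
\]
where $\eta_i$ is the tail of the convergent integral $\int_0^T\!\!\int_\Sigma|\grad\mE(f)|^2$ over the shrinking $t$-window around $T$ corresponding to $J$ after rescaling; since $t_i\to T$ and $r_i\to 0$, this window collapses and $\eta_i\to 0$. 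Letting first $R\to\infty$ and then using the arbitrariness of $J$, I conclude $\grad\mE_{\R^n}(\hf)\equiv 0$ on $\hSig$, i.e. $\hf$ is Willmore in $\R^n$, and $\partial_t\hf=-\grad\mE_{\R^n}(\hf)\equiv 0$, so the flow is static. The nontriviality \eqref{eq:nontrivial} of the limit — which guarantees $\hf$ is not totally geodesic and the statement is non-vacuous — comes from the concentration hypothesis \eqref{eq:concentration} via the choice of $t_i,r_i,x_i$, exactly as in the construction of the blowup in the earlier part of Theorem~\ref{th:Blowup}.

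**Main obstacle.** The technical heart is justifying the vanishing of the rescaled tail: one must argue carefully that the portion of $\int_0^T\!\!\int_\Sigma|\grad\mE|^2$ corresponding, under the $i$-th rescaling, to the finite time-interval $J$ actually tends to zero. This is where it matters that $\int_0^T\!\!\int_\Sigma|\grad\mE|^2<\infty$ (so tails vanish) and that the time-rescaling factor $r_i^4$ shrinks the relevant $t$-window to a point near $T$; when $T=\infty$ one instead uses that the integral over $[0,\infty)$ is finite so its tail over $[t_i+r_i^4\min J,\,t_i+r_i^4\max J]\subset[t_i+o(1)\cdot\!,\infty)$ with $t_i\to\infty$ vanishes. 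A secondary point requiring care is the bookkeeping of the scaling weights of $\grad\mE$, $d\mu$ and $dt$ so that the space-time integral is genuinely invariant; this is routine but must be done to make the tail argument meaningful.
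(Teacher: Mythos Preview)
Your approach is essentially the paper's: both arguments use the global energy identity to show that the space--time $L^2$ norm of $\grad\mE$ over any compact window vanishes in the limit. The paper writes this as $\mE(f)|_{t_i+r_i^4\tau_1}-\mE(f)|_{t_i+r_i^4\tau_2}\to 0$ (since $\mE$ is monotone with a limit at $T$), you phrase it as the vanishing tail of $\int_0^T\!\int_\Sigma|\grad\mE|^2$; these are the same computation, and both yield $\grad\mE_{\R^n}(\hf)\equiv 0$, hence $\partial_t\hf\equiv 0$.

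There is one step you pass over too quickly. You write ``$\grad\mE_{\R^n}(\hf)\equiv 0$, i.e.\ $\hf$ is Willmore in $\R^n$'' as if the two statements were synonymous. They are equivalent, but this needs a sentence: in flat $\R^n$ the Gauss equation gives the pointwise identity $|\AAA|^2=|\HHH|^2-2k_g$, and $\int k_g\,d\mu$ has trivial first variation under compactly supported normal perturbations, so the Euler--Lagrange operators of $\mE$ and $4\mW$ coincide. The paper supplies this justification by a slightly different route: it first bounds $\mE(\hf)<\infty$ via lower semicontinuity (Lemma~\ref{semic_A2}), then invokes the non-compact Gauss--Bonnet theorem of M\"uller--\v{S}ver\'ak \cite{MuSv95} to conclude that on the (possibly non-compact) limit surface $\hf(\hSig)$ the functionals $\mE$ and $\mW$ differ only by a topological constant, whence their critical points coincide. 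Either argument closes the gap; you should include one of them.
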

\begin{proof}
Let $\t_1, \t_2 \in \R$ with $\t_1 < \t_2$, $U \Subset \hSig$ be an open set, and $\phi_i$ the diffeomorphisms in the convergence statement of Theorem \ref{th:Blowup}. Then it holds
\begin{equation*}
\begin{split}
\int^{\t_2}_{\t_1} \int_U |\grad \mE(f_i \circ \phi_i)|^2 d\mu_{f(\phi (\cdot), t_i)} d \t & = \int^{\t_2}_{\t_1} \int_{\phi_i(U)} |\grad \mE(f_i)|^2 d\mu_{f(\cdot,t_i)} d \t \\
                                                                                                                                              & \leq \int^{\t_2}_{\t_1} \int_{\Sigma} |\grad \mE(f_i)|^2 d\mu_{f(\cdot,t_i)} d \t \\
                                                                                                                                              & = \mE(f_i)|_{\t=\t_1} - \mE(f_i)|_{\t=\t_2}\\
                                                                                                                                              & = \mE(f)|_{t_i + r^4_i \t_1} - \mE(f)|_{t_i + r^4_i \t_2}\,.
\end{split}
\end{equation*}
Since $t_i \to T$ and $r_i \to 0$, the locally smooth convergence $f_i \to \hf$ implies that $\grad \mE(\hf) = 0$. In view of the fact that $\mE (f)$ is uniformly bounded from above and lower semicontinuous with respect to the local smooth convergence of flows, we obtain that also $\mE(\hf)$ is uniformly bounded and we can apply the Gauss-Bonnet theorem (see \cite{MuSv95} for the case of non compact surfaces with finite total curvature) to obtain that on $\hf(\cdot,t)$ the  Willmore functional and the functional $\mE$ differ by a multiplicative factor (given by the normalization we have chosen) and by a constant depending only on the topology of the immersed surface. The thesis follows.
\end{proof}
\begin{Remark}
The limit surface $\hSig$ obtained in Theorem \ref{th:Blowup} could a priori have more than one connected component. In the following, we will always restrict our analysis to one of its connected components. Notice also that from \eqref{eq:nontrivial} it follows that the blowups we construct are non empty.
\end{Remark}
%
%
\section{Long time existence}
In this section we prove a long time existence theorem for $\grad \mE-$flows.\\

We preliminarly recall some results which will be used in the proof.
\begin{Lemma}\cite[Proposition 2.1]{KuMS14} \label{lem:GMT}
Let $f: \Sigma \times [0,T) \to N^n$ be a smooth one parameter family of immersions of a smooth closed surface $\Sigma$ into a smooth manifold $N^n$.
If $f$ satisfies
\be
E(f) := \sup_{t \in [0,T)} \int_{\Sigma} |\AAA^{f(\cdot,t)}|^2 d \mu_{f(\cdot,t)} <  \infty
\ee
and if the sectional curvatures $K^{N^n}$ of $N^n$ satisfy $\inf_{N^n} K^{N^n} > 0$, then it holds
\be
\mu(f) \leq \frac{1}{\inf_{N^n} K^{N^n}} (2 E(f) + 2 \pi \chi(\Sigma))\,.
\ee
\end{Lemma}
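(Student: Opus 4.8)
The plan is to prove the area bound
\[
\mu(f) \leq \frac{1}{\inf_{N^n} K^{N^n}} \bigl(2 E(f) + 2\pi \chi(\Sigma)\bigr)
\]
by combining the Gauss equation with the Gauss--Bonnet theorem, exactly along the lines of \cite[Proposition 2.1]{KuMS14}. First I would recall the Gauss equation relating the intrinsic Gaussian curvature $k_g$ of $(\Sigma,g)$ to the extrinsic data: for a surface immersed in $N^n$ one has, at each point,
\[
k_g = \bar{K}(T\Sigma) + \tfrac12\bigl(|\HHH^f|^2 - |\AAA^f|^2\bigr),
\]
where $\bar{K}(T\Sigma)$ denotes the sectional curvature of the ambient metric $\bg$ evaluated on the tangent plane $f_*(T_x\Sigma)$. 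Since $\inf_{N^n}K^{N^n} =: \kappa_0 > 0$, we have the pointwise lower bound $\bar{K}(T\Sigma) \geq \kappa_0$, hence
\[
\kappa_0 \leq k_g + \tfrac12\bigl(|\AAA^f|^2 - |\HHH^f|^2\bigr) \leq k_g + \tfrac12 |\AAA^f|^2
\]
using $|\HHH^f|^2 \geq 0$.

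Next I would integrate this inequality over $\Sigma$ with respect to $\mu_{f(\cdot,t)}$ at a fixed time $t$. On the left the constant $\kappa_0$ yields $\kappa_0\,\mu_{f(\cdot,t)}(\Sigma)$. On the right, the Gauss--Bonnet theorem gives $\int_\Sigma k_g\, d\mu_f = 2\pi\chi(\Sigma)$, and the curvature term is controlled by $\tfrac12\int_\Sigma |\AAA^{f(\cdot,t)}|^2 d\mu_{f(\cdot,t)} \leq \tfrac12 E(f)$ by the definition of $E(f)$. Therefore
\[
\kappa_0\,\mu_{f(\cdot,t)}(\Sigma) \leq 2\pi\chi(\Sigma) + \tfrac12 E(f) \leq 2\pi\chi(\Sigma) + 2E(f)
\]
(keeping the slightly lossy constant $2$ to match the stated form, or one may simply record the sharper bound with $\tfrac12$). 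Dividing by $\kappa_0$ and taking the supremum over $t\in[0,T)$ gives the claimed estimate on $\mu(f)$.

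Since this is essentially a one-line argument once the Gauss equation is in hand, there is no serious obstacle; the only point requiring a little care is the correct sign and normalization in the Gauss equation for surfaces of arbitrary codimension — one must make sure that the ambient sectional curvature appears with the right sign and that the combination $|\HHH^f|^2 - |\AAA^f|^2$ (rather than, say, the traceless part) is what enters, so that discarding $|\HHH^f|^2 \geq 0$ goes in the favorable direction. Because the statement is quoted directly from \cite{KuMS14}, I would simply cite that reference for the computation and present the short derivation above as a reminder.
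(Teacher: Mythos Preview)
The paper does not supply its own proof of this lemma; it is simply quoted from \cite[Proposition~2.1]{KuMS14} as a preliminary tool. Your derivation via the Gauss equation $k_g = \bar K(T\Sigma) + \tfrac12(|\HHH^f|^2 - |\AAA^f|^2)$ combined with Gauss--Bonnet is exactly the standard argument behind that reference and is correct (and, as you note, actually gives the sharper constant $\tfrac12$ rather than $2$ in front of $E(f)$).
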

\begin{Remark}
The assumption on the positivity of the sectional curvatures will be needed to ensure that $\mu_{f(\cdot,t)}(\Sigma)$ stays uniformly bounded along a $\grad\mE-$flow satisfying $\mu_{f(\cdot , 0)}(\Sigma) < \infty$.
\end{Remark}

\begin{Lemma} \cite[Lemma 2.2]{KuMS14} \label{GMT0}
Let $F : \Sigma \to N^3$ be a closed immersion of a surface into a compact Riemannian manifold $(N^3,\bg)$. If $\mW(F) + \mu_F(\Sigma) \leq K < \infty$, there exist a constant constant $C > 0$ such that, for any $x \in N^3$ and $r>0$ holds
\be
\mu_F(\Sigma_r(x)) \leq C r^2 \,.
\ee
\end{Lemma}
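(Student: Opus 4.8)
\emph{Strategy.} The statement is the Riemannian version of Simon's area--monotonicity for surfaces with $L^{2}$--bounded mean curvature, and the plan is to prove it along exactly those lines: reduce to small radii, produce an almost--monotone quantity from the first variation of area, integrate it, and absorb the Willmore term using $\mW(F)\le K$.

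\textbf{Reduction to small radii.} Let $\rho_{0}=\rho_{0}(N^{3},\bg)>0$ be a length scale, to be fixed below, smaller than the injectivity radius of $(N^{3},\bg)$. If $r\ge\rho_{0}$ the claim is immediate:
\begin{equation*}
\mu_{F}(\Sigma_{r}(x))\le\mu_{F}(\Sigma)\le K\le\frac{K}{\rho_{0}^{2}}\,r^{2}.
\end{equation*}
So it remains to bound $\mu_{F}(\Sigma_{r}(x))$ by $Cr^{2}$ for $0<r<\rho_{0}$, uniformly in $x\in N^{3}$.

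\textbf{Riemannian monotonicity.} Fix $x\in N^{3}$, set $\bar r:=\dist_{\bg}(x,\cdot)$ on $B^{N^{3}}_{\rho_{0}}(x)$ and $u:=\tfrac12\bar r^{2}$. Since $\bar{\RRR}$ and $\bgrad\bar{\RRR}$ are bounded on the compact manifold $N^{3}$, Hessian comparison for the distance function gives $\bgrad^{2}u=\bg+\mathrm E$ with $|\mathrm E|\le C_{1}(N^{3},\bg)\,\bar r^{2}$ for $\bar r<\rho_{0}$, so that tracing over an orthonormal frame of $T\Sigma$ along $F$ yields $\div_{\Sigma}(\bgrad u)=2+\mathrm e$ with $|\mathrm e|\le 2C_{1}\bar r^{2}$. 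For $0<\sigma<\rho_{0}$ apply, to the vector field $X:=\gamma(\bar r)\,\bgrad u$ with $\gamma=\gamma_{\sigma}$ a smooth approximation of $\mathbf 1_{[0,\sigma]}$ satisfying $\gamma'\le0$, the first variation identity
\begin{equation*}
\int_{\Sigma}\div_{\Sigma}X\,d\mu_{F}=-\int_{\Sigma}\bg(\HHH^{F},X)\,d\mu_{F},
\end{equation*}
which follows from \eqref{varmass} and the divergence theorem on the closed surface $\Sigma$. Here $\div_{\Sigma}X=2\gamma+\gamma\,\mathrm e+\gamma'(\bar r)\,\bar r\,|\bgrad^{\Sigma}\bar r|^{2}$ and $\bg(\HHH^{F},X)=\gamma\,\bar r\,\bg(\HHH^{F},(\bgrad\bar r)^{\perp})$, the summand $\gamma\,\mathrm e$ being the only genuinely Riemannian term. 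Carrying out the standard manipulation --- differentiating in $\sigma$, splitting the mean--curvature cross term by Young's inequality and completing the square as in the Willmore--flow literature \cite{KuSc01} --- one arrives, for $0<\sigma<\rho<\rho_{0}$ with $\rho_{0}$ chosen small enough (in terms of $C_{1}$) for the $\gamma\,\mathrm e$ term to be absorbed, at
\begin{equation*}
\frac{\mu_{F}(\Sigma_{\sigma}(x))}{\sigma^{2}}\le C_{2}\!\left(\frac{\mu_{F}(\Sigma_{\rho}(x))}{\rho^{2}}+\int_{\Sigma_{\rho}(x)}|\HHH^{F}|^{2}\,d\mu_{F}\right),\qquad C_{2}=C_{2}(N^{3},\bg).
\end{equation*}

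\textbf{Conclusion and main obstacle.} Taking $\rho=\rho_{0}$, and using $\mu_{F}(\Sigma_{\rho_{0}}(x))\le\mu_{F}(\Sigma)\le K$ together with $\int_{\Sigma}|\HHH^{F}|^{2}\,d\mu_{F}=4\,\mW(F)\le 4K$, we obtain
\begin{equation*}
\frac{\mu_{F}(\Sigma_{\sigma}(x))}{\sigma^{2}}\le C_{2}\Big(\frac{K}{\rho_{0}^{2}}+4K\Big)=:C=C(N^{3},\bg,K)
\end{equation*}
for every $0<\sigma<\rho_{0}$ and every $x\in N^{3}$; with the reduction step this proves the lemma. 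The only real work is the Riemannian correction in the previous paragraph: one must verify that, below the geometry--dependent scale $\rho_{0}$, the error $\mathrm e$ in $\div_{\Sigma}(\bgrad u)$, the $\gamma'$--contribution and the mean--curvature cross term can all be estimated so as to leave an almost--monotone quantity. This is precisely where the uniform bounds on $\bar{\RRR}$ and $\bgrad\bar{\RRR}$ furnished by the compactness of $N^{3}$ are used, and they are what make both $\rho_{0}$ and $C$ independent of $x$. The full computation, adapting Simon's monotonicity \cite{Simo93}, is carried out in \cite[Lemma 2.2]{KuMS14}.
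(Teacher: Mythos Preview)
The paper does not supply its own proof of this lemma: it is quoted verbatim from \cite[Lemma~2.2]{KuMS14} and used as a black box in the proof of Proposition~\ref{pr:Subconv}. There is therefore nothing to compare your argument against within the present paper.

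That said, your sketch is the correct one and is precisely the route taken in \cite{KuMS14}: Simon's monotonicity formula for surfaces with $L^{2}$--bounded mean curvature, transplanted to a compact Riemannian ambient via Hessian comparison for the squared distance function, with the resulting curvature error absorbed below a scale $\rho_{0}$ depending only on $(N^{3},\bg)$. The reduction to $r<\rho_{0}$ using the global area bound, and the final step plugging $\mu_{F}(\Sigma)\le K$ and $4\mW(F)\le 4K$ into the monotonicity inequality at scale $\rho_{0}$, are exactly as in the cited reference. Your write--up is honest about where the work lies (the ``standard manipulation'' paragraph) and correctly defers the full computation to \cite{KuMS14}; nothing is missing or wrong.
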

\begin{Lemma} \label{Inversion} \cite[Lemma 4.1]{KuSc12} 
Let $F: \Sigma \to \R^n$ be an immersion of a surface such that $\int_{\Sigma} \HHH^2 d \mu_F < \infty$. Then there exists a point $x_0 \in \R^n$ and a ball $B_{\rho}(x_0)$ with centre at $x_0$ and radius $\rho > 0$ such that $F(\Sigma) \cap B_{\rho}(x_0) = \emptyset$.
\end{Lemma}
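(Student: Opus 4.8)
The plan is to reduce the claim to the statement that $\overline{F(\Sigma)}\neq\R^n$ — any open ball in the complement then works — and to deduce this from Simon's monotonicity formula. I tacitly take $n\ge 3$: for $n=2$ the mean curvature of a surface immersed in $\R^n$ vanishes identically and the claim is false for $F=\mathrm{id}\colon\R^2\to\R^2$ (in the situation of interest $n=3$). First I would pass to the Radon measure $\mu:=F_{\#}\mu_F$ on $\R^n$, i.e.\ the push‑forward of the Riemannian measure of $(\Sigma,g)$; it has $\spt\mu=\overline{F(\Sigma)}$ and generalized mean curvature $\HHH\in L^2(\mu)$ with $\int_{\R^n}|\HHH|^2\,d\mu=\int_\Sigma|\HHH^F|^2\,d\mu_F<\infty$. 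Since $\mu_F$ has full support on $\Sigma$, any $x_0\notin\spt\mu$ automatically satisfies $F(\Sigma)\cap B_\rho(x_0)=\emptyset$ for a suitable $\rho>0$: the open set $F^{-1}(B_\rho(x_0))$ is $\mu_F$-null, hence empty. So it suffices to show $\spt\mu\neq\R^n$.

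The analytic input I would invoke is Simon's monotonicity formula (\cite{Simo93}): for each $x\in\R^n$ the function
\[
r\ \longmapsto\ \frac{\mu(B_r(x))}{\pi r^2}+\frac{1}{16\pi}\int_{B_r(x)}|\HHH|^2\,d\mu+\frac{1}{2\pi r^2}\int_{B_r(x)}\HHH\cdot(y-x)\,d\mu
\]
is non-decreasing. From this I would extract two facts: (i) the density $\theta(x):=\lim_{r\to 0^+}(\pi r^2)^{-1}\mu(B_r(x))$ exists, is finite, and satisfies $\theta(x)\ge 1$ for all $x\in\spt\mu$; and (ii) there is a universal constant $\delta_0>0$ such that $\int_{B_r(x)}|\HHH|^2\,d\mu<\delta_0$ forces $\mu(B_r(x))\ge\tfrac{\pi}{2}\,r^2\,\theta(x)$. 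Both follow because the monotone quantity tends to $\theta(x)$ as $r\to 0$, while for (ii) the last summand is absorbed by a Young inequality (which fixes $\delta_0$).

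Then I would argue by contradiction. Suppose $\spt\mu=\R^n$. Fix $x_0$; since $|\HHH|^2\,d\mu$ is a finite measure charging no single point, choose $r_0>0$ with $\int_{B_{r_0}(x_0)}|\HHH|^2\,d\mu<\delta_0$, so the same holds on every smaller concentric ball. For an arbitrary integer $k\ge 1$ and any $r\le r_0$, take a maximal $\tfrac{r}{2k}$-separated set $\{y_1,\dots,y_N\}\subset B_{r/2}(x_0)$; a standard packing count gives $N\ge k^n$, the balls $B_{r/(4k)}(y_j)$ are pairwise disjoint and contained in $B_r(x_0)$ (so each carries $\HHH$-energy $<\delta_0$), and $\theta(y_j)\ge 1$ since $y_j\in\spt\mu$. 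By (ii) and disjointness,
\[
\mu(B_r(x_0))\ \ge\ \sum_{j=1}^{N}\mu\!\big(B_{r/(4k)}(y_j)\big)\ \ge\ N\cdot\frac{\pi}{2}\Big(\frac{r}{4k}\Big)^{2}\ \ge\ \frac{\pi}{32}\,k^{\,n-2}\,r^{2},
\]
so $(\pi r^2)^{-1}\mu(B_r(x_0))\ge\tfrac{1}{32}k^{\,n-2}$ for all $r\le r_0$; letting $r\to 0$ yields $\theta(x_0)\ge\tfrac{1}{32}k^{\,n-2}$, and since $k$ is arbitrary and $n\ge 3$ this forces $\theta(x_0)=\infty$, contradicting (i). Hence $\spt\mu\neq\R^n$, as needed.

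I expect the only genuine obstacle to be the second step: verifying that Simon's formula and its consequences (i)--(ii) are legitimately available here — that the weight measure is Radon and that $\theta\ge 1$ holds on all of $\spt\mu$, including at points that are accumulation points of small, area‑vanishing pieces of $F(\Sigma)$ — which is precisely where the hypothesis $\int_\Sigma|\HHH^F|^2\,d\mu_F<\infty$ enters. In the only case needed in what follows, namely $F$ a proper immersion, no such points occur, $\mu$ is manifestly locally finite, and $\spt\mu=F(\Sigma)$, so the argument goes through without change. As an alternative to the packing step one can quote the density comparison theorem of geometric measure theory: $\theta\ge 1$ on $\spt\mu=\R^n$ would give $\mu(B)\ge c\,\H^2(B)$ for every ball $B$, and $\H^2(B)=+\infty$ for $n\ge 3$ contradicts the local finiteness of $\mu$.
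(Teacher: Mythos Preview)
The paper does not give its own proof of this lemma: it is simply quoted from \cite[Lemma~4.1]{KuSc12} and used as a black box in the proof of Theorem~\ref{th:Main}. So there is no in-paper argument to compare against; the relevant comparison is with the cited source, and there the tool is exactly the one you reach for---Simon's monotonicity formula for integral $2$-varifolds with $\HHH\in L^2$. In that sense your approach is the expected one, and your packing argument (or the density--comparison shortcut you mention at the end) is a clean way to turn the lower density bound $\theta\ge 1$ on $\spt\mu$ into the impossibility of $\spt\mu=\R^n$ when $n\ge 3$.

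Your caveats are well placed. The observation that the statement fails for $n=2$ (take $F=\mathrm{id}\colon\R^2\to\R^2$) is correct and worth recording; the paper only uses the case $n=3$. Likewise, your concern about whether $\mu=F_{\#}\mu_F$ is Radon and whether $\theta\ge 1$ extends from $F(\Sigma)$ to all of $\overline{F(\Sigma)}$ is the one genuine technical point, and you handle it honestly: for a proper immersion $F(\Sigma)$ is already closed, $\mu$ is locally finite, and everything goes through. Since the only application in the paper is to the proper blowup limit $f^0\colon\Sigma\to\R^3$ produced by Theorem~\ref{th:Blowup}, this restriction is harmless here. If you want the statement in the generality written, one can invoke the upper semicontinuity of the density (itself a consequence of the monotonicity formula) to push $\theta\ge 1$ to the closure, but you would still need local finiteness of $\mu$ as an input; that is a standing hypothesis in Simon's framework and is implicit in \cite{KuSc12} as well.
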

\begin{thm}\cite[Theorem 2]{White87}\label{th:White}
Let $F: \Sigma \to \R^3$ be an immersion of a connected oriented surface $\Sigma$, which is also complete with respect to the induced pullback Riemannian metric $g$. If $\int_{\Sigma} |\AAA^f|^2 d \mu_F < \infty$, then $\int_{\Sigma}k_g d\mu_F$ is an integral multiple of $4\pi$.
\end{thm}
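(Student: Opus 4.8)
Since this theorem is quoted from \cite{White87}, I only indicate how I would prove it; the plan is to reduce everything to a Gauss--Bonnet argument once the ends of $\Sigma$ have been understood. Because the ambient space is flat, the Gauss equation gives $k_g=\kappa_1\kappa_2$ for the principal curvatures $\kappa_1,\kappa_2$ of $F$, so $2|k_g|\le\kappa_1^2+\kappa_2^2=|\AAA^F|^2$ and the hypothesis forces $\int_\Sigma|k_g|\,d\mu_F<\infty$. First I would invoke Huber's theorem to identify $(\Sigma,g)$ conformally with a closed oriented surface $\bar\Sigma$ from which finitely many points $p_1,\dots,p_k$ (one per end) have been removed, and introduce conformal coordinates $z_i$ on a punctured disc around each $p_i$, writing $g=e^{2u_i}|dz_i|^2$ there. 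From the curvature equation $-\Delta u_i=k_g\,e^{2u_i}$ with right-hand side in $L^1$ I would extract the expansion $u_i=\beta_i\log|z_i|+\omega_i$ with $\omega_i$ bounded and $r\,\partial_r\omega_i\to0$ in $L^1(d\theta)$ as $|z_i|=r\to0$; completeness of $g$ at the end forces $\beta_i\le-1$.

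The crux, and the one place where the full bound $\int_\Sigma|\AAA^F|^2<\infty$ (rather than just $\int|k_g|<\infty$) and the flatness of the ambient space are used, is to show that each end of $F$ is asymptotic to an $m_i$-fold plane for some integer $m_i\ge1$; equivalently that $\beta_i=-2m_i$, which in particular forces the Gauss map of $F$ to have a limit at each $p_i$. Here I would use Lemma~\ref{Inversion}: choosing a ball $B_\rho(x_0)$ disjoint from $F(\Sigma)$ and composing $F$ with the inversion of $\R^3$ centred at $x_0$ yields a bounded immersion $\tilde F$ which sends the deleted neighbourhood of each $p_i$ onto a deleted neighbourhood of a single point $q_i\in\R^3$; since the energy $\int|\AAA^F|^2\,d\mu_F$ is invariant under this inversion (the centre $x_0$ being disjoint from $F(\Sigma)$), $\tilde F$ again has finite second-fundamental-form energy, and a removable-singularity analysis for conformal immersions of finite total curvature (in the spirit of \cite{MuSv95}) shows that $\tilde F$ extends across $p_i$ to a possibly branched conformal immersion of the full disc, the branching order being an integer $m_i-1\ge0$. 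Undoing the inversion then gives $\beta_i=-2m_i$.

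Finally I would apply Gauss--Bonnet to $\Sigma_r:=\Sigma\setminus\bigcup_i\{0<|z_i|<r\}$, namely
\[
\int_{\Sigma_r}k_g\,d\mu_F+\sum_{i=1}^k\int_{\{|z_i|=r\}}\kappa_\partial\,ds=2\pi\chi(\Sigma_r)=2\pi\big(\chi(\bar\Sigma)-k\big),
\]
with the circles carrying the boundary orientation of $\Sigma_r$. Letting $r\to0$, the finiteness of the total curvature gives $\int_{\Sigma_r}k_g\,d\mu_F\to\int_\Sigma k_g\,d\mu_F$, while the expansion of $u_i$ gives $\int_{\{|z_i|=r\}}\kappa_\partial\,ds\to-2\pi(1+\beta_i)=-2\pi(1-2m_i)$. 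Hence
\[
\int_\Sigma k_g\,d\mu_F=2\pi\chi(\bar\Sigma)+2\pi\sum_{i=1}^k\beta_i=2\pi\chi(\bar\Sigma)-4\pi\sum_{i=1}^k m_i,
\]
which lies in $4\pi\Z$ because $\chi(\bar\Sigma)$ is even, $\bar\Sigma$ being a closed oriented surface. The hard part is the second step: the mere finiteness of $\int|k_g|$ only yields a Cohn--Vossen-type inequality, and it is precisely the curvature control $\int|\AAA^F|^2<\infty$ together with the ambient flatness that upgrades each $\beta_i$ to an even integer; the first and third steps are the standard bookkeeping for complete surfaces of finite total curvature.
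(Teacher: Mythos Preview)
The paper does not prove this statement at all; it merely quotes it from \cite{White87} and uses it as a black box in the proof of Theorem~\ref{th:Main}. So there is no ``paper's own proof'' to compare against. That said, your sketch contains a genuine error that breaks the argument.

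Your claimed relation $\beta_i=-2m_i$ between the conformal-factor exponent and the end multiplicity is incorrect. For an end asymptotic to an $m$-fold plane one has, in a conformal coordinate $z$ with the puncture at $z=0$, $F(z)\sim z^{-m}$ and hence $e^{2u}=|F_z|^2\sim |z|^{-2m-2}$, so $\beta_i=-(m_i+1)$. You can check this on Enneper's surface (genus~$0$, one end with $m=3$, total curvature $-4\pi$): your formula $2\pi\chi(\bar\Sigma)-4\pi\sum m_i$ gives $-8\pi$, while the correct formula $2\pi\bigl(\chi(\bar\Sigma)-\sum(m_i+1)\bigr)$ (which is exactly the one the paper uses a few lines below the statement of Theorem~\ref{th:White}) gives $-4\pi$.

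With the corrected value $\beta_i=-(m_i+1)$ your Gauss--Bonnet bookkeeping yields
\[
\int_\Sigma k_g\,d\mu_F \;=\; 2\pi\chi(\bar\Sigma)\;-\;2\pi\sum_{i=1}^k(m_i+1),
\]
and this is \emph{not} obviously a multiple of $4\pi$: evenness of $\chi(\bar\Sigma)$ is not enough, you would also need $\sum_i(m_i+1)$ to be even, and nothing in your inversion/removable-singularity step forces that parity. This is precisely the step where codimension one is essential and where your outline misses the point. White's actual argument is that, once the ends are understood, the Gauss map $\nu:\Sigma\to\SS^2$ extends continuously across the punctures to a map $\bar\nu:\bar\Sigma\to\SS^2$, and then
\[
\int_\Sigma k_g\,d\mu_F \;=\; \int_{\bar\Sigma}\bar\nu^{*}\omega_{\SS^2}\;=\;4\pi\,\mathrm{deg}(\bar\nu)\in 4\pi\Z.
\]
You do mention the Gauss map having a limit at each $p_i$, but you never use it; the degree argument is what actually delivers the $4\pi$ quantization, and it has no analogue in higher codimension (where indeed the result fails). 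So the second step of your plan needs to be replaced by, or at least supplemented with, the Gauss-map degree computation.
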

The following interior estimates, as well as Theorem \ref{th:ConcComp}, depend just on the structure of equation \eqref{strev2ff} and have been first proven in \cite{Link13} for the Willmore flow in Riemannian manifolds. Thus they hold true for $\grad \mE-$flows as well.
\begin{thm} \cite[Lemma 3.3]{Link13}\label{th:InteriorEst}
Let $f : \Sigma \times [0,t] \to N^n$ be a $\grad \mE-$flow with $\mu(f) < \infty$. There exist constants $\rho_0 > 0$, $C((N^n,\bg)) >0$ and $\e_1((N^n,\bg)) > 0$ such that, if for a $\rho < \rho_0$, $t \leq C \rho^4$ we have
\begin{equation*}
\sup_{s \in [0,t]} \int_{f^{-1}(B^{\bg}_\rho(x),s)} |\AAA^{f(\cdot,s)}|^2 d\mu_{f(\cdot,s)} \leq \e_1\,,
\end{equation*}
then for every $k \in \N$ it holds
\be
||\grad^k \AAA||_{L^{\infty}(f^{-1}(B^{\bg}_{\rho/2}(x),s))} \leq c((N^n,\bg), k, C) s^{-\frac{k+1}{4}} \,,
\ee
for all $s \in (0,t]$.
\end{thm}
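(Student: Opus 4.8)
The plan is to reduce the statement to the corresponding interior estimate for the Willmore flow proved in \cite[Lemma 3.3]{Link13}. The whole point is that the evolution equation \eqref{strev2ff} of $\AAA^f$ along a $\grad\mE-$flow has exactly the same differential structure as the evolution equation of the second fundamental form along a Willmore flow (Equation~(2.10) in \cite{Link13}): in both cases one has, schematically, $\partial_t\AAA^f = \D^2\AAA^f + P^2_3 + P^0_5 + Q^{2,1} + Q^{0,1}_{R*R}$, with leading operator $\D^2$ and a right-hand side which, in the notation of Definition~\ref{PQnotation}, is a universal polynomial in $\AAA^f$, its covariant derivatives, the ambient curvature $\bar{\RRR}$ and its covariant derivatives, subject to the homogeneity count recorded there. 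The proof of the cited estimate uses nothing about the flow beyond this structure, the bound $\mu(f)<\infty$, and the local smallness of $\int|\AAA^f|^2$, so it transfers verbatim; I indicate below how it runs.

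First I would set up the localized energy inequalities. Given $x\in N^n$ and $\rho<\rho_0$, fix a cut-off $\gamma\in C^\infty_c(B^{\bg}_\rho(x))$ with $\gamma\equiv 1$ on $B^{\bg}_{\rho/2}(x)$ and $|\bgrad^j\gamma|\le c\,\rho^{-j}$, and pull it back along $f$. For each $k$ one differentiates \eqref{strev2ff} $k$ times, using the calculus rules \eqref{rule1}--\eqref{rule3} to keep track of the structure, tests against $\gamma^{2\sigma}\grad^k\AAA^f$ for $\sigma=\sigma(k)$ large, and integrates by parts over $\Sigma$; the terms coming from the variation of the measure $d\mu_f$ are, by \eqref{varmass} together with the expression \eqref{strV} of $\grad\mE(f)$, again of the admissible type. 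This yields
\be
\frac{d}{dt}\int_\Sigma \gamma^{2\sigma}|\grad^k\AAA^f|^2\, d\mu_f + \int_\Sigma \gamma^{2\sigma}|\grad^{k+2}\AAA^f|^2\, d\mu_f \;\leq\; R_k\,,
\ee
where $R_k$ is a sum of $P-$ and $Q-$terms. Each $P-$term is estimated by the Michael--Simon Sobolev inequality on $\Sigma$ and Gagliardo--Nirenberg interpolation --- whose constants are controlled by $\mu(f)$ and by the local area ratios $\mu_f(f^{-1}(B^{\bg}_r(x)))\le c\,r^2$, which hold after shrinking $\rho_0$ because $\mE$ is non-increasing along the flow so $\int|\AAA^f|^2$ stays bounded --- and the decisive feature is that every such term carries a factor $\big(\int_{\spt\gamma}|\AAA^f|^2\, d\mu_f\big)^\theta$ with $\theta>0$. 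Hence the hypothesis $\int_{f^{-1}(B^{\bg}_\rho(x),s)}|\AAA^f|^2\le\e_1$ lets one absorb the $P-$terms into $\int_\Sigma\gamma^{2\sigma}|\grad^{k+2}\AAA^f|^2$ once $\e_1$ is small; the $Q-$terms are handled the same way, picking up extra factors $||\bgrad^j\bar{\RRR}||_{L^\infty(N,\bg)}$ and powers of $\mu(f)$.

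Next I would run the Kuwert--Sch\"atzle bootstrap. For $k=0$ and $t\le C\rho^4$ the inequality above gives $\sup_{s\le t}\int\gamma^{2\sigma}|\AAA^f|^2 + \int_0^t\!\int\gamma^{2\sigma}|\grad^2\AAA^f|^2\le c\,\e_1$; inserting a time weight $s^{(k+1)/2}$ into the test function and arguing inductively on $k$ produces, for every $k$,
\be
\sup_{s\in(0,t]} s^{\frac{k+1}{2}}\int_\Sigma \gamma^{2\sigma}|\grad^k\AAA^f|^2\, d\mu_f \;\leq\; c\big((N^n,\bg),k,C\big)\,.
\ee
Feeding this into the Michael--Simon embedding applied to $|\grad^k\AAA^f|^2$ (bounding its $L^\infty$ norm on $\{\gamma=1\}$ by the $L^2$ norms of $|\grad^k\AAA^f|$ and $|\grad^{k+1}\AAA^f|$ over $\spt\gamma$, with the small-energy improvement) yields the asserted bound $||\grad^k\AAA^f||_{L^\infty(f^{-1}(B^{\bg}_{\rho/2}(x),s))}\le c\,s^{-(k+1)/4}$. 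Finally a parabolic rescaling $\bg\mapsto\lambda^{-2}\bg$, $t\mapsto\lambda^4 t$, $\rho\mapsto\lambda\rho$ --- under which \eqref{strev2ff} is invariant up to ambient-curvature terms that carry a positive power of $\lambda$ and so only improve as $\lambda\to 0$ --- fixes the homogeneity of the constants, allowing reduction to a normalized radius.

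The only genuine subtlety, exactly as in \cite{Link13}, is the bookkeeping of the ambient-curvature $Q-$terms: one has to check that at each stage of the induction they stay of strictly lower differential order in $\AAA^f$ than the absorbing term $\int\gamma^{2\sigma}|\grad^{k+2}\AAA^f|^2$, so that they never compete with it and are harmless given $\mu(f)<\infty$ and the $C^\infty$ bounds on $\bar{\RRR}$. This is precisely what the $P,Q$ calculus of Definition~\ref{PQnotation} and the rules \eqref{rule1}--\eqref{rule3} are designed to make routine, so once the structural identity of \eqref{strev2ff} with Equation~(2.10) of \cite{Link13} is observed, no new ideas are needed.
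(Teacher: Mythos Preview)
Your proposal is correct and matches the paper's approach: the paper does not give an independent proof but simply observes (in the sentence preceding the statement) that the interior estimates of \cite[Lemma~3.3]{Link13} depend only on the structure of the evolution equation, and since \eqref{strev2ff} coincides with Equation~(2.10) of \cite{Link13}, the result carries over verbatim to $\grad\mE-$flows. Your additional sketch of the localized energy inequalities and the Kuwert--Sch\"atzle bootstrap is a faithful outline of the cited argument and goes beyond what the paper itself provides.
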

We now state and prove our main result.

\begin{thm} \label{th:Main}
Let $f: \SS^2 \times [0,T) \to N^3$ be a maximal $\grad \mE-$flow satisfying

\be \label{eq:BoundArea}
\mu(f) := \sup_{t \in [0,T)} \mu_{f(\cdot,t)} < \infty
\ee
and
\be \label{Ebound}
\mE(f(\cdot, 0)) \leq 8 \pi \,.
\ee
Then $T =  \infty$ and the flow do not concentrate.
\end{thm}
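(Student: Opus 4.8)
The plan is to argue by contradiction, following the strategy used for the Willmore flow in \cite{Link13} and \cite{MeWW13}, exploiting the topological restriction $\Sigma = \SS^2$ and the energy threshold $8\pi$. Suppose that either $T < \infty$ or the flow concentrates at $T \in (0,\infty]$. Since $\mu(f) < \infty$ by hypothesis \eqref{eq:BoundArea}, and since $\mE$ is nonincreasing along the flow (so $\mE(f(\cdot,t)) \le \mE(f_0) \le 8\pi$ for all $t$), we are in a position to apply the blowup construction of Theorem \ref{th:Blowup}. In either scenario (a finite-time singularity necessarily produces concentration, and we are assuming concentration at $T$ in the infinite-time case), there are sequences $t_i \to T$, $r_i \to 0$, $x_i \in N^3$ so that the rescaled flows converge locally smoothly to a static, properly immersed Willmore surface $\hf : \hSig \to \R^3$ with the nontriviality property \eqref{eq:nontrivial}, i.e.\ $\int_{\hf^{-1}(\overline{B_1(0)})}|\AAA^{\hf}|^2\,d\mu_{\hf} > 0$.

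Next I would control the energy of the blowup. By the lower semicontinuity of $\mE$ under the local smooth convergence of rescaled flows (Lemma \ref{semic_A2}, applied at each fixed time, the scale invariance of $\mE$ in ambient dimension reasons, and the fact that the second fundamental form of $\hf$ in $\R^3$ arises as the limit of the intrinsic second fundamental forms of the rescaled immersions), one gets
\be
\int_{\hSig} |\AAA^{\hf}|^2\,d\mu_{\hf} \;\le\; \liminf_{i\to\infty} \int_{\SS^2} |\AAA^{f(\cdot,t_i)}|^2\,d\mu_{f(\cdot,t_i)} \;\le\; \mE(f_0) \;\le\; 8\pi\,.
\ee
Now comes the main point: $\hSig$, being a connected component of the blowup limit, is a complete noncompact surface (completeness since the immersion is proper and the limit flow is static; noncompact since a sequence of rescalings with $r_i \to 0$ cannot limit onto a compact surface while keeping $\mu(f)$ finite unless the surface degenerates, and \eqref{eq:nontrivial} rules out the trivial limit). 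I would use the Gauss equation in $\R^3$, $2k_g = \HHH^2 - |\AAA|^2$ pointwise (so $\int |\AAA|^2 = 2\int \HHH^2/\!4 \cdot 2 - 2\int k_g$, i.e.\ $\int|\AAA|^2 = 4\mW(\hf) - 2\int_{\hSig} k_g\,d\mu_{\hf}$, together with White's theorem (Theorem \ref{th:White}): for a complete immersed surface in $\R^3$ with finite $\int|\AAA|^2$, the integral $\int_{\hSig} k_g\,d\mu_{\hf}$ is an integral multiple of $4\pi$. For a complete noncompact surface this Gauss curvature integral is $\le 0$ (it is $2\pi\chi$ minus a nonnegative contribution from the ends in the standard convention, or more directly $\le 0$ by the Cohn-Vossen–type inequality invoked for the Willmore case), hence $\int_{\hSig} k_g\,d\mu_{\hf} \le 0$; combined with White it is $\le 0$ and an integer multiple of $4\pi$. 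Meanwhile $\hf$ is Willmore and noncompact with finite Willmore energy, so by Lemma \ref{Inversion} we may invert it at a point off its image to obtain a compact (branched) Willmore sphere; the Li–Yau / Willmore lower bound then forces $\mW(\hf) \ge 4\pi$ with the value $4\pi$ attained only by (a cover of) a round sphere, which is compact — contradicting noncompactness — so in fact $\mW(\hf) > 4\pi$ strictly, giving $\int_{\hSig}|\AAA^{\hf}|^2\,d\mu_{\hf} = 4\mW(\hf) - 2\int k_g > 16\pi - 0 = 16\pi > 8\pi$, contradicting the energy bound above.

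The expected main obstacle is making the curvature/energy bookkeeping on the noncompact limit surface fully rigorous: one must justify that $\int_{\hSig} k_g\,d\mu_{\hf}$ is well-defined and $\le 0$ (this is where the finiteness of total curvature and the results on noncompact surfaces with finite total curvature cited around Lemma \ref{Staticflow} and \cite{MuSv95} enter), that the inversion of Lemma \ref{Inversion} legitimately converts $\hf$ into a closed Willmore surface whose Willmore energy is comparable, and that the Li–Yau-type inequality may be applied to conclude $\mW(\hf) > 4\pi$ in the noncompact case rather than merely $\ge 4\pi$. Once the strict inequality $\int_{\hSig}|\AAA^{\hf}|^2 > 8\pi$ is established, the contradiction with $\int_{\hSig}|\AAA^{\hf}|^2 \le \mE(f_0) \le 8\pi$ is immediate, and we conclude $T = \infty$ and that the flow does not concentrate, i.e.\ $\e^2_T = 0$ in the notation of \eqref{eq:concentration}.
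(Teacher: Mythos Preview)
Your contradiction strategy breaks at the step where you claim $\mW(\hf) > 4\pi$ for the noncompact blowup limit. This inequality is simply false in general: a plane has $\mW = 0$, a catenoid has $\mW = 0$, and in fact any complete minimal surface has $\mW = 0$. The inversion argument you sketch does not repair this. Under an inversion centred off $\hf(\hSig)$, the Willmore energy of the compactified image satisfies $\mW(J_{x_0}\circ\hf) = \mW(\hf) + 4\pi\sum_j m_j$, where the $m_j$ are the multiplicities of the ends; the Li--Yau bound $\mW(J_{x_0}\circ\hf) \ge 4\pi$ therefore yields only $\mW(\hf) \ge 4\pi\bigl(1-\sum_j m_j\bigr) \le 0$, which is vacuous. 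Consequently the chain $\int_{\hSig}|\AAA^{\hf}|^2 = 4\mW(\hf) - 2\int k_g > 16\pi$ collapses, and no contradiction with the upper bound $\le 8\pi$ is obtained.

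The paper's proof runs in the opposite direction: rather than forcing $\int|\AAA^{\hf}|^2$ to be \emph{large}, it forces it to be \emph{zero}. First one sharpens the energy bound to a strict inequality $\int_{\hSig}|\AAA^{\hf}|^2 < 8\pi$ (by observing that $\mE$ strictly decreases after the initial time unless $f_0$ is already critical). Then M\"uller--\v{S}ver\'ak gives embeddedness and orientability, Huber gives a conformal compactification $\hSig = \hat\Sigma\setminus\{p_1,\dots,p_k\}$, and the combination of Gauss--Bonnet (in the form $\int k_g = 2\pi\bigl(\chi(\hat\Sigma)-\sum_j(m_j+1)\bigr)$), the two--sided bound $|\int k_g|<4\pi$, and White's quantisation forces $\chi(\hat\Sigma)=2$ with a single planar end. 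At this point one inverts and applies the Kuwert--Sch\"atzle removability lemma to get a smooth closed Willmore sphere with $\mW < 8\pi$; Bryant's classification then says this must be a round sphere, so $\hf(\hSig)$ is a flat plane. That contradicts the nontriviality \eqref{eq:nontrivial} of the blowup, not the energy bound. Your outline reaches the correct limiting object and cites the right circle of results (White, inversion, \cite{MuSv95}), but the numerical contradiction you aim for is not available; the actual contradiction is geometric.
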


\begin{Remark} \label{rmk:areabound}
Assumption \eqref{eq:BoundArea} requires a control on the area of $f$ which is global in time. Nevertheless, if the sectional curvatures of $(N^3,\bg)$ are positive, the bound in \eqref{eq:BoundArea} is satisfied if the initial datum of the flow has finite area  (see Lemma \ref{lem:GMT}). As for condition \eqref{Ebound}, in \cite{KuMS14} (using results proven in \cite{Mond10} and \cite{Mond13}) it is shown that the existence of a point $x \in N^3$ at which the scalar curvature of $\bg$ is positive is sufficient to ensure that there exist immersions $F :\SS^2 \to N^3$ with $\mE(F) < 8 \pi$. 
\end{Remark}

\begin{proof}
If $\mE (f(\cdot , 0)) = 8 \pi$, then either $f(\cdot,0) : \SS^2 \to N^3$ is a critical immersion for $\mE$ and the theorem trivially holds, or $\mE(f(\cdot,t))$ is strictly monotone decreasing in time as long as the flow exists and we have
\be \label{eq:BoundE}
\mE(f(\cdot,t)) < 8 \pi \quad \textrm{for all} \quad t \in (0,T)\,.
\ee   
We assume by contraddiction that the flow concentrates at $T$.
With
\begin{equation*}
f_i (\SS^2 , g_i) \times \Big[- \frac{t_i}{r^4_i} , \frac{T-t_i}{r^4_i} \Big) \to (N^3,\bg_i)
\end{equation*}
we denote the sequence of flows constructed in Theorem \ref{th:Blowup} and we set $f^0_i (\cdot):= f_i(\cdot , 0)$. By the locally smooth convergence of flows $f_i \to \hf$, we get that the respective time slices locally smoothly converge and thus $f^0_i \to f^0$, where  $f^0: \Sigma \to \R^3$ is the immersion defined by $f^0(\cdot) = f(\cdot , 0)$. Notice that $f^0$ maps to $\R^3$, since $(N^3,\bg)$ is closed and along the sequence of dilations $(N^3,\bg_i := r^{-2}_i \bg)$ converges to $\R^3$ endowed with the Euclidean metric. Lemma \ref{semic_A2} implies
\be \label{eq:BoundLimA}
\int_{\Sigma}  |\AAA^{f^0}|^2 d \mu_{f^0} < 8 \pi
\ee
and by an elementary estimate, we obtain
\be \label{BoundLimH}
\int_{\Sigma} |\HHH^{f^0}|^2 d \mu_{f^0} = 4 \mW(f^0) < 16 \pi \,.
\ee
Since the minimum of the Willmore functional on compact immersions of closed surfaces into $\R^3$ is $4 \pi$, we conclude that $f^0(\Sigma)$ is not compact. Moreover, the bound in \eqref{eq:BoundLimA} allows to apply Corollary 4.3.2 in \cite{MuSv95} and we can conclude that the immersion $f^0$ is actually an embedding, as well as that $\Sigma$ is orientable.\\
The bound \eqref{eq:BoundLimA} and a result of Huber in \cite{Hub57}, ensure that $f^0(\Sigma)$ can be conformally parametrized over a compact Riemann surface $\hSig$, from which a finite number of points $\{p_1, ..., p_k\}$ have been removed.\\
We now prove that $\hSig$ is a sphere. To this aim we exploit the Gauss-Bonnet Theorem and \eqref{eq:BoundLimA} to obtain
\be
-4 \pi < - \frac{1}{2} \int_{\Sigma} |\AAA^{f^0}|^2 d \mu_{f^0} \leq \int_{\Sigma} k_g d \mu_{f^0} = 2 \pi (\chi(\hSig) - \sum^k_{j =1}(m_j+1)) \leq \frac{1}{2} \int_{\Sigma} |\AAA^{f^0}|^2 d \mu_{f^0} < 4\pi \,,
\ee
where $m_j$ is the multiplicity of the conformal immersion of $\Sigma \setminus \{p_1, ..., p_k\}$ in the neighborhood of the point $p_j$ (see \cite{MuSv95}). By Theorem \ref{th:White} we can conclude that $2\pi  (\chi(\hSig) - \sum^k_{p =1}(m_p+1))$ is a multiple of $4 \pi$ and this, together with the previous chain of inequalities, implies that
\be \label{chimult}
\chi(\hSig) = \sum^k_{p=1} (m_p+1) \,.
\ee
Being $\hSig$ a compact orientable surface, we have that $\chi(\hSig) \leq 2$, while on the other hand, by \eqref{chimult}, $\chi(\hSig) \geq 2$ holds. This means that $\chi(\hSig) = 2 = \sum^k_{p=1} (m_p+1)$. Consequently, we have that $p=1$ and hence $\Sigma$ has the topology of a two dimensional plane. Since Lemma \ref{Staticflow} ensures that $f^0 : \Sigma \to \R^3$ is a Willmore embedding, we can apply Lemma 4.1 in \cite{KuSc04} to deduce that the image of $f^0(\Sigma)$ under an inversion $J_{x_0} :\R^3 \to \R^3$ with respect to a point $x_0 \in  \R^3$ not belonging to $f^0(\Sigma)$ is a smooth Willmore surface (actually a sphere) with $\int_{\Sigma} |\HHH^{J_{x_0} \circ f^0}|^2 d \mu_{J_{x_0} \circ f^0} < 32 \pi$. The existence of such a point $x_0$ is a consequence of the finiteness of $\mW(f^0)$ and of Lemma \ref{Inversion}. By Bryant's classification of Willmore spheres (see \cite{Bry84}), the set $(J_{x_0} \circ f^0) (\Sigma)$ can be just a round sphere, since the value of the Willmore functional evaluated on other Willmore spheres would not satisfy the bound in \eqref{BoundLimH}. This means that $f^0(\Sigma)$ is a flat plane and we get a contraddiction with the non triviality of the blowup (ensured by \eqref{eq:nontrivial}). Thus $T = \infty$ holds and the flow do not concentrate.\\
\end{proof}

We now conclude by proving the subconvergence of the flow to a critical immersion. Following the line proposed in \cite{KuSc01}, we proceed by proving a uniform bound in time on the concentration of the second fundamental form along a $\grad \mE-$flow which satisfies the hypothesis of Theorem \ref{th:Main}.
\begin{Lemma} \label{lm:UnifBdA}
Under the hypotheses in Theorem \ref{th:Main}, there exists $r_0 > 0$ such that
\be \label{eq:UnifSTBoundA}
\int_{\SS^2_{r_0}(x)} |\AAA^{f(\cdot,t})|^2 d\mu_{f(\cdot,t)} < \e_1 \quad \textrm{for all} \,\, x \in N^3 \,\,\, \textrm{and} \,\,\,  t \in [0,\infty) \,,
\ee
where $\e_1$ is as in Theorem \ref{th:InteriorEst}.
\end{Lemma}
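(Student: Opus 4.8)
The plan is to argue by contradiction, exactly in the spirit of the concentration-compactness argument in \cite{KuSc01}. Suppose \eqref{eq:UnifSTBoundA} fails for every $r_0 > 0$. Then we can find sequences $r_j \to 0$, $t_j \in [0,\infty)$ and $x_j \in N^3$ with
\be
\int_{\SS^2_{r_j}(x_j)} |\AAA^{f(\cdot,t_j)}|^2 d\mu_{f(\cdot,t_j)} \geq \e_1 \,.
\ee
First I would dispose of the case in which the $t_j$ stay in a bounded interval: passing to a subsequence $t_j \to t_\infty \in [0,\infty)$, the smoothness of the flow on $\SS^2 \times [0, t_\infty]$ forces $\int_{\SS^2_{r}(x)} |\AAA^{f(\cdot,t_\infty)}|^2 d\mu_{f(\cdot,t_\infty)} \to 0$ uniformly in $x$ as $r \to 0$ (by absolute continuity of the finite measure $|\AAA^{f(\cdot,t_\infty)}|^2 d\mu_{f(\cdot,t_\infty)}$ together with continuity in $t$), contradicting the lower bound for large $j$. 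Hence $t_j \to \infty$, so the flow concentrates at $T = \infty$ in the sense of \eqref{eq:concentration} with $\e_T^2 \geq \e_1 > 0$.

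Next I would invoke the blowup machinery already established: since $\mu(f) < \infty$ by \eqref{eq:BoundArea} and the flow concentrates at $T = \infty$, Theorem \ref{th:Blowup} produces rescaled flows $f_i$ converging locally smoothly on $\hSig \times \R$ to a static properly immersed Willmore surface $\hf : \hSig \to \R^n$ satisfying the nontriviality condition \eqref{eq:nontrivial}. But the proof of Theorem \ref{th:Main} — which applies verbatim here, since its hypotheses \eqref{eq:BoundArea} and \eqref{Ebound} are exactly those of the present lemma — shows that this situation is impossible: the bound $\mE(f(\cdot,0)) \leq 8\pi$ together with lower semicontinuity (Lemma \ref{semic_A2}) forces $\int_{\Sigma}|\AAA^{\hf}|^2 d\mu_{\hf} < 8\pi$ on the blowup, whence via the Gauss--Bonnet/Huber analysis and Bryant's classification the blowup is a flat plane, in contradiction with \eqref{eq:nontrivial}. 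This contradiction establishes the existence of $r_0$.

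Actually, a cleaner bookkeeping is to observe that Theorem \ref{th:Main} has already ruled out \emph{any} concentration of the flow at $T = \infty$; that is, $\e_T^2 = \lim_{\rho \to 0}\limsup_{t \to \infty}\chi(\rho,t) = 0$. Combining this with the bounded-time case handled above, for every $\delta > 0$ there is $\rho_\delta > 0$ and a time $t_\delta$ with $\chi(\rho,t) < \delta$ for all $\rho \leq \rho_\delta$ and all $t \geq t_\delta$, while on the compact interval $[0, t_\delta]$ a further shrinking of $\rho$ (uniform continuity of $t \mapsto |\AAA^{f(\cdot,t)}|^2 d\mu_{f(\cdot,t)}$ as a family of finite measures) gives $\chi(\rho,t) < \delta$ there too. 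Taking $\delta = \e_1$ and $r_0 = \rho_{\e_1}$ yields \eqref{eq:UnifSTBoundA}. The main obstacle, and the only point requiring genuine care, is the bounded-time case: one must justify that the concentration $\chi(\rho, t)$ can be made small \emph{uniformly in $t$} over a compact time interval, which follows from the joint continuity of $(x,t)\mapsto \int_{\SS^2_\rho(x)}|\AAA^{f(\cdot,t)}|^2 d\mu_{f(\cdot,t)}$ and a standard covering/compactness argument, the limiting case $\rho \to 0$ being controlled by the fact that at each fixed time the second fundamental form lies in $L^2$ of a finite measure with no atoms.
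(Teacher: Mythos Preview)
Your proposal is correct and follows essentially the same contradiction argument as the paper: assuming failure produces sequences $r_i \to 0$, $t_i$, $x_i$ witnessing concentration at $T=\infty$, which contradicts the non-concentration conclusion of Theorem~\ref{th:Main}. You are in fact more careful than the paper, which simply asserts $t_i \nearrow \infty$ without explicitly disposing of the bounded-time case as you do; your ``cleaner bookkeeping'' paragraph is exactly the paper's (terse) argument.
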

\begin{proof}
Suppose that thesis does not hold. Then there exist sequences $r_i \searrow 0$, $x_i \in N^3$ and $t_i \nearrow \infty$, such that $\int_{\SS^2_{r_i}(x_i)}|\AAA^{f(\cdot,t_i)}|^2 d\mu_{f(\cdot,t_i)} \geq \e_1$. This implies that the flow concentrates at $T=\infty$ (see equation \eqref{eq:concentration}), in contradiction with Theorem \ref{th:Main}.
\end{proof}
\begin{Proposition} \label{pr:Subconv}
Under the hypotheses in Theorem \ref{th:Main}, for any sequence $t_i \to \infty$, the sequence of immersions $f(\cdot ,t_i)$ converges (modulo subsequences) locally smoothly to an immersion which is critical for $\mE$.
\end{Proposition}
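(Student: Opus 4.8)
The plan is to combine the uniform curvature estimates furnished by Lemma~\ref{lm:UnifBdA} and Theorem~\ref{th:InteriorEst} with the monotonicity of $\mE$ along the flow, and then to argue by smooth subconvergence of suitable time translates of $f$; throughout we use that, by Theorem~\ref{th:Main}, $T=\infty$ and the flow does not concentrate. The first ingredient is the energy identity: since $\mE(f(\cdot,t))$ is non-increasing along the flow and bounded below by $0$, the limit $\mE_{\infty}:=\lim_{t\to\infty}\mE(f(\cdot,t))$ exists, and integrating $\tfrac{d}{dt}\mE(f(\cdot,t))=-\int_{\SS^2}|\grad\mE(f(\cdot,t))|^2\,d\mu_{f(\cdot,t)}$ over $[0,\infty)$ gives
\[
\int_0^{\infty}\int_{\SS^2}|\grad\mE(f(\cdot,t))|^2\,d\mu_{f(\cdot,t)}\,dt = \mE(f(\cdot,0))-\mE_{\infty} < \infty ,
\]
so that the tail of this integral vanishes as $t\to\infty$.

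The second ingredient is a uniform-in-time bound on the geometry of the time slices. Since the flow does not concentrate, Lemma~\ref{lm:UnifBdA} yields $r_0>0$ (which we may also take smaller than the $\rho_0$ of Theorem~\ref{th:InteriorEst}) with $\int_{\SS^2_{r_0}(x)}|\AAA^{f(\cdot,t)}|^2\,d\mu_{f(\cdot,t)}<\e_1$ for all $x\in N^3$ and all $t\ge 0$. Applying Theorem~\ref{th:InteriorEst} on each window $[\tau-Cr_0^4,\tau]$ and reading the estimate at the right endpoint produces, for every $k\in\N$, a bound $\|\grad^k\AAA^{f(\cdot,\tau)}\|_{L^{\infty}}\le c_k$ uniform in $\tau\ge Cr_0^4$. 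Together with \eqref{eq:BoundArea} and the Gauss equation — which controls the intrinsic curvature of the pullback metrics through $\AAA^{f(\cdot,\tau)}$ and the fixed, hence bounded, curvature of $N^3$ — this bounds the geometry of the immersions $f(\cdot,\tau)$ uniformly in time.

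Now fix $t_i\to\infty$ and consider the time translates $f_i(\cdot,s):=f(\cdot,t_i+s)$, $s\in[-1,1]$, defined for $i$ large. By the previous step these have uniformly bounded area and uniformly bounded $\grad^k\AAA$ for all $k$, while the flow equation $\partial_s f_i=-\grad\mE(f_i)$ together with Theorem~\ref{th:InteriorEst} provides the corresponding control in the $s$-variable. Applying Theorem~\ref{Cooper} time-slice by time-slice, together with Arzel\`a--Ascoli in $s$, a subsequence of $\{f_i\}$ converges locally smoothly on $\SS^2\times[-1,1]$; since $\SS^2$ is compact and $(N^3,\bg)$ is fixed and compact, this convergence is in fact smooth on all of $\SS^2\times[-1,1]$, the limit is a smooth family $f_{\infty}:\SS^2\times[-1,1]\to N^3$, and passing the flow equation to the limit shows that $f_{\infty}$ is again a $\grad\mE$-flow. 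I expect this compactness step to be the main obstacle: one must rule out any degeneration of the limit — so that each $f_{\infty}(\cdot,s)$ is a genuine immersion of $\SS^2$ rather than a surface of smaller complexity — and arrange the convergence to be smooth up to the order appearing in $\grad\mE$, both of which rest on the uniform geometric bounds just obtained.

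Finally, applying the energy identity to $f_i$ gives
\[
\int_{-1}^{1}\int_{\SS^2}|\grad\mE(f_i(\cdot,s))|^2\,d\mu_{f_i(\cdot,s)}\,ds = \mE(f(\cdot,t_i-1))-\mE(f(\cdot,t_i+1)) \longrightarrow \mE_{\infty}-\mE_{\infty}=0
\]
as $i\to\infty$. By the smooth convergence $f_i\to f_{\infty}$ the left-hand side tends to $\int_{-1}^{1}\int_{\SS^2}|\grad\mE(f_{\infty}(\cdot,s))|^2\,d\mu_{f_{\infty}(\cdot,s)}\,ds$, which must therefore vanish; hence $\grad\mE(f_{\infty}(\cdot,s))\equiv 0$ and in particular $\grad\mE(f_{\infty}(\cdot,0))=0$. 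Since $f_{\infty}(\cdot,0)$ is, up to reparametrization, the locally smooth limit of the subsequence $f(\cdot,t_i)$, it is an immersion which is critical for $\mE$ in the sense of Definition~\ref{dfnz:A2speed}, which is the assertion.
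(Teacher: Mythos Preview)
Your argument is essentially the paper's: uniform smallness of the local energy (Lemma~\ref{lm:UnifBdA}) feeds into the interior estimates (Theorem~\ref{th:InteriorEst}) to bound all $\grad^k\AAA$, Cooper's compactness (Theorem~\ref{Cooper}) produces a subsequential limit, time-translated flows converge, and the energy identity over a time window (exactly as in Lemma~\ref{Staticflow}) forces $\grad\mE$ of the limit to vanish. The paper also invokes Lemma~\ref{GMT0} for the local area bound required by Theorem~\ref{Cooper}, but your global bound \eqref{eq:BoundArea} suffices since the target is compact.

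The one point where you diverge is your claim that the limit is still an immersion of $\SS^2$ into $N^3$, with ``smooth convergence on all of $\SS^2\times[-1,1]$''. You correctly flag this as the main obstacle, and it is: Theorem~\ref{Cooper} only yields a limit $\hf:\hSig\to N^3$ after reparametrizations $\phi_i$, with $\hSig$ a priori different from $\SS^2$, and ruling out degeneration (e.g.\ neck formation) is not automatic from the curvature bounds alone. The paper simply does not make this claim: the proposition asserts only \emph{locally smooth} convergence to a critical immersion, and the paper is content with a limit on $\hSig$. So your difficulty is self-imposed---drop the assertion that $\hSig=\SS^2$ and your proof goes through and matches the paper's.
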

\begin{proof}
By Lemma \ref{lm:UnifBdA} and Theorem \ref{th:InteriorEst}, for any $t_i > 1$ we get
\be \label{eq:IntInf}
||\grad^k \AAA^{f(\cdot , t_i)}||_{L^{\infty}} \leq c(k) \,.
\ee
Lemma \ref{GMT0} gives
\be
\mu_{f({\cdot,t_i})}(B_R(x_i)) \leq C(R)\,.
\ee
The hypotheses of Theorem \ref{Cooper} are thus satisfied and we can deduce that there exists a proper immersion $\hf : \hSig \to N^3$ such that (modulo a subsequence) $f(\cdot , t_i) \to \hf$.\\
For $t \geq - t_i$, we consider the $\grad \mE-$flows $\hf_i (p,t) := f(p , t_i + t)$. These flows satisfy the bounds in \eqref{eq:IntInf} and their initial data converge to $\hf$. Thus, modulo a subsequence, $\hf_i \to \tilde{f}$, where $\tilde{f} : \hSig \times [0,\infty) \to N^3$ is a smooth $\grad \mE-$flow with initial datum $\hf$.\\
Estimating $\grad \mE (\hf)$ as in Lemma \ref{Staticflow}, yields $\grad \mE (\hf) = 0$ and the last claim is proven.
\end{proof}
\begin{Remark}
The analysis of the long time behaviour of $\grad \mE-$flows discussed in the present work can be adapted also to the case of the Willmore flow of surfaces into Riemannian manifolds. In particular, under conditions ensuring the uniform boundedness of the area of the evolving surfaces (see \cite{Mond14} for an analysis of some special cases) and guaranteeing the existence of immersions $F : \Sigma \to N^3$ with $\mW(F) < 4 \pi$, the analogues of Theorem \ref{th:Main} and Proposition \ref{pr:Subconv} can be proven.
\end{Remark}

\begin{ackn}
The author has been partially supported by the DFG Collaborative Research Center SFB/Transregio 71 and would like to warmly thank Prof. E. Kuwert for the many fruitful discussions and for his helpful comments on a preliminary version of the paper. 
\end{ackn}


\begin{thebibliography}{10}

\bibitem{Breu12}
P.~Breuning.
\newblock Immersions with bounded second fundamental form.
\newblock { \em J. Geom. Anal.}, doi 10.1007/s12220-014-9472-7

\bibitem{Bry84}
R.~Bryant.
\newblock A duality theorem for Willmore surfaces.
\newblock {\em J. Diff. Geom.}, 20:23--53, 1984

\bibitem{Coop11}
A.~A.~Cooper.
\newblock A compactness theorem for the second fundamental form.
\newblock arXiv:1006.5697

\bibitem{Hub57}
A.~Huber. 
\newblock On subharmonic functions and differential geometry in the large.
\newblock {\em Comm. Math. Helv.}, 32:13--72, 1957.

\bibitem{KuSc01}
E.~Kuwert and R.~Sch{\"a}tzle.
\newblock The {W}illmore flow with small initial energy.
\newblock {\em J. Diff. Geom.}, 57:409--441, 2001.

\bibitem{KuSc02}
E.~Kuwert and R.~Sch{\"a}tzle.
\newblock Gradient flow for the Willmore functional.
\newblock {\em Commun. Anal. Geom.}, 10(2):307--339, 2002.

\bibitem{KuSc04}
E.~Kuwert and R.~Sch{\"a}tzle.
\newblock Removability of point singularities of {W}illmore surfaces.
\newblock {\em Ann. of Math. (2)}, 160(1):315--357, 2004.

\bibitem{KuSc12}
E.~Kuwert and R.~Sch{\"a}tzle.
\newblock Closed Surfaces with bounds on their {W}illmore energy.
\newblock {\em Annali Sc. Norm. Sup. Pisa. Cl. Sci.} 11:605--634, 2012.

\bibitem{KuMS14}
E.~Kuwert, A.~Mondino and J.~Schygulla.
\newblock Existence of immersed spheres minimizing curvature functionals in compact 3-manifolds.
\newblock {\em Math. Ann.}, 359(1):379--425.

\bibitem{Link13}
F.~Link
\newblock Gradient flow for the Willmore functional in Riemannian manifolds with bounded geometry.
\newblock {PhD Thesis, Albert-Ludwigs-Universit{\"a}t Freiburg}, arXiv:1308.6055.

\bibitem{MeWW13}
J.~Metzger, G.~Wheeler and V.M.~Wheeler.
\newblock Willmore flow of surfaces in Riemannian spaces I: Concentration-compactness.
\newblock arXiv:1308.6024

\bibitem{Mond14}
A. Mondino.
\newblock Existence of Integral $m-$Varifolds minimizing $|A|^p$
and $|H|^p$ in Riemannian Manifolds.
\newblock {\em Calc. Var.}, 49(1-2):431--470, 2014.

\bibitem{Mond10}
A. Mondino.
\newblock Some results about the existence of critical points
for the Willmore functional.
\newblock {\em Math. Z.}, 266(3):583--622, 2010.

\bibitem{Mond13}
A. Mondino.
\newblock The conformal Willmore Functional: a perturbative
approach.
\newblock {\em J. Geom. Anal.}, 23(2):764-811, 2013.

\bibitem{MoRi12}
A.~Mondino and T.~Rivi{\'e}re.
\newblock Immersed spheres of finite total curvature into manifolds.
\newblock {\em Adv. Calc. Var.}, 7(4):493--538, 2014.

\bibitem{MoRi13}
A.~Mondino and T.~Rivi{\'e}re.
\newblock Willmore Spheres in Compact Riemannian Manifolds.
\newblock {\em Adv. Math.}, 232(1):608--676, 2013.

\bibitem{MuSv95}
S.~M{\"u}ller and V.~{\v{S}}ver\'ak.
\newblock On surfaces of finite total curvature.
\newblock {\em J. Diff. Geom.}, 42:229--258, 1995.


\bibitem{Simo93}
L. Simon.
\newblock Existence of surfaces minimizing the Willmore functional.
\newblock {\em Comm. Anal. Geom.}, 1(2):281--325, 1993.

\bibitem{Simo01}
G. Simonett.
\newblock The Willmore flow near spheres.
\newblock {\em Differential Integral Equations}, 14(8):897--1024, 2001.

\bibitem{White87}
B. White.
\newblock Complete surfaces of finite total curvature.
\newblock {\em J. Diff. Geom.}, 26:315--326, 1987.



\end{thebibliography}
\end{document}